%
%
%
%

\documentclass[reqno]{amsart}

\input xy
\xyoption{all}
\usepackage{epsfig}
\usepackage{color}
\usepackage{amsthm}
\usepackage{amssymb}
\usepackage{amsmath}
\usepackage{amscd}
\usepackage{amsopn}
\usepackage{url}
\usepackage{hyperref}\hypersetup{colorlinks}


\usepackage{color} 

\definecolor{darkred}{rgb}{1,0,0} 
\definecolor{darkgreen}{rgb}{0,0.8,0}
\definecolor{darkblue}{rgb}{0,0,1}

\hypersetup{colorlinks,
linkcolor=darkblue,
filecolor=darkgreen,
urlcolor=darkred,
citecolor=darkgreen}

%
%
%
%
\newcommand{\labell}[1] {\label{#1}}


\numberwithin{equation}{section}
\newtheorem {Theorem}{Theorem}
\numberwithin{Theorem}{section}

\newtheorem {Lemma}[Theorem]    {Lemma}

\newtheorem {Corollary}[Theorem]{Corollary}
\theoremstyle{definition}
\newtheorem{Definition}[Theorem]{Definition}
\theoremstyle{remark}
\newtheorem{Remark}[Theorem]{Remark}
\newtheorem{Example}[Theorem]{Example}

%

\expandafter\chardef\csname pre amssym.def at\endcsname=\the\catcode`\@
\catcode`\@=11
\def\undefine#1{\let#1\undefined}
\def\newsymbol#1#2#3#4#5{\let\next@\relax
 \ifnum#2=\@ne\let\next@\msafam@\else
 \ifnum#2=\tw@\let\next@\msbfam@\fi\fi
 \mathchardef#1="#3\next@#4#5}
\def\mathhexbox@#1#2#3{\relax
 \ifmmode\mathpalette{}{\m@th\mathchar"#1#2#3}%
 \else\leavevmode\hbox{$\m@th\mathchar"#1#2#3$}\fi}
\def\hexnumber@#1{\ifcase#1 0\or 1\or 2\or 3\or 4\or 5\or 6\or 7\or 8\or
 9\or A\or B\or C\or D\or E\or F\fi}

\font\teneufm=eufm10
\font\seveneufm=eufm7
\font\fiveeufm=eufm5
\newfam\eufmfam
\textfont\eufmfam=\teneufm
\scriptfont\eufmfam=\seveneufm
\scriptscriptfont\eufmfam=\fiveeufm

\catcode`\@=\csname pre amssym.def at\endcsname


\newcommand{\CA}{{\mathcal A}}

\newcommand{\CN}{{\mathcal N}}

\newcommand{\OO}{{\mathcal O}}

\newcommand{\CS}{{\mathcal S}}

\newcommand{\id}{{\mathit id}}

\newcommand{\const}{{\mathit const}}

\newcommand{\fs}{{\mathfrak s}}

\newcommand{\tal}{\tilde{\alpha}}
\newcommand{\tx}{\tilde{x}}

\newcommand{\tF}{\tilde{F}}

\newcommand{\ka}{{\kappa}}

\def    \C      {{\mathbb C}}
\def    \R      {{\mathbb R}}

\def    \Z      {{\mathbb Z}}

\def    \Q      {{\mathbb Q}}

\def    \CP     {{\mathbb C}{\mathbb P}}
\def    \RP     {{\mathbb R}{\mathbb P}}

\def    \12    {{\frac{1}{2}}}

\def    \p      {\partial}

\def    \HF     {\operatorname{HF}}
\def    \HC     {\operatorname{HC}}
\def    \CC     {\operatorname{CC}}

\def    \Gr     {\operatorname{Gr}}

\def    \Fix     {\operatorname{Fix}}

\def    \lcm     {\operatorname{lcm}}

\def    \MUCZ  {\operatorname{\mu_{\scriptscriptstyle{CZ}}}}





\begin{document}


\setlength{\smallskipamount}{6pt}
\setlength{\medskipamount}{10pt}
\setlength{\bigskipamount}{16pt}





\title[Iterated Index and the Mean Euler Characteristic]{Iterated
  Index and the Mean Euler Characteristic}

\author[Viktor Ginzburg]{Viktor L. Ginzburg}
\author[Yusuf G\"oren]{Yusuf G\"oren}

\address{Department of Mathematics, UC Santa Cruz, Santa Cruz, CA
  95064, USA} \email{ginzburg@ucsc.edu} \email{ygoren@ucsc.edu}

\subjclass[2010]{53D42, 37C25, 37J45, 70H12} \keywords{Index
  and Lefschetz number, periodic orbits, contact forms and Reeb flows,
  contact and Floer homology, the mean Euler characteristic}

\date{\today} 

\thanks{The work is partially supported by NSF grants DMS-1007149 and DMS-1308501.}

\bigskip

\begin{abstract} 
  The aim of the paper is three-fold. We begin by proving a formula,
  both global and local versions, relating the number of periodic
  orbits of an iterated map and the Lefschetz numbers, or indices in
  the local case, of its iterations. This formula is then used to
  express the mean Euler characteristic (MEC) of a contact manifold in
  terms of local, purely topological, invariants of closed Reeb
  orbits, without any non-degeneracy assumption on the
  orbits. Finally, turning to applications of the local MEC formula to
  dynamics, we use it to reprove a theorem asserting the existence of
  at least two closed Reeb orbits on the standard contact $S^3$ (by
  Cristofaro-Gardiner and Hutchings in the most general form) and the
  existence of at least two closed geodesics for a Finsler metric on
  $S^2$ (Bangert and Long).

\end{abstract}

\maketitle

\tableofcontents


\section{Introduction}
\labell{sec:main-results}

In this paper we prove an elementary formula relating the number of
periodic orbits of an iterated map and the Lefschetz numbers of the
iterations. We also establish a local version of this formula which
connects the number of periodic orbits, suitably defined, of a germ at
an isolated fixed point and the indices of its iterations. The latter
result is then used to express the mean Euler characteristic of a
contact manifold with finitely many simple closed Reeb orbits in terms
of local, purely topological, invariants of closed Reeb orbits, when
the orbits are not required to be non-degenerate. Finally, this
expression is utilized to reprove a theorem asserting the existence of
at least two Reeb orbits on the standard $S^3$ (see
\cite{CGH,GHHM,LL}) and the existence of at least two closed geodesics
for a Finsler metric, not necessarily symmetric, on $S^2$ (see
\cite{BL,CGH}).

Let us now describe the results in some more detail.

Consider first a smooth map $F\colon M\to M$, where
$M$ is a closed manifold. We show that the number $I_\ka(F)$ of $\ka$-periodic
orbits of $F$, once the orbits of a certain type are discounted, can be
expressed via the Lefschetz numbers of the iterations $F^d$ for $d|\ka$
(see Theorem \ref{thm:index-maps}), and hence $I_\ka(F)$ is homotopy
invariant. In a similar vein, given a germ of a smooth map at a
fixed point $x$, isolated for all iterations, one can associate to it
a certain invariant $I_\ka(F)$, an iterated index, which counts
$\ka$-periodic orbits of a small perturbation of $F$ near $x$, with
again some orbits being discounted. The iterated index can also be expressed
via the indices of the iterations $F^d$ for $d|\ka$ 
(see Theorem \ref{thm:index-germ}), and hence $I_\ka(F)$ is again a homotopy
invariant of $F$ as long as $x$ remains uniformly isolated for $F^\ka$. We further
investigate the properties of the iterated index in Section
\ref{sec:it-index}, which is essentially independent of the rest of the
paper. The results in this section, although rather elementary, are new
to the best of our knowledge; see however \cite{CMPY}. 

Next, we apply the iterated index to calculation of the mean Euler
characteristic (MEC) of contact manifolds. The MEC of a contact
manifold, an invariant introduced in \cite{VK} (cf.\ \cite{Ra}), is
the mean alternating sum of the dimensions of contact homology. It was
observed in \cite{GK} that when a Reeb flow has finitely many simple
periodic orbits and these orbits are totally non-degenerate, the MEC
can be expressed via certain local invariants of the closed orbits,
computable in terms of the linearized flow. (Here an orbit is called 
totally non-degenerate if all its iterations are non-degenerate.) This
expression for the MEC generalizes a resonance relation for the mean
indices of the closed Reeb orbits on the sphere, proved in
\cite{Vi:res}, and is further generalized to certain cases
where there are infinitely many orbits in \cite{Es}, including the
Morse-Bott setting. The results of this type have been used for
calculations of the MEC (see, e.g., \cite{Es}) and also in
applications to dynamics; \cite{GK,GG:generic,Ra}. It is this latter
aspect of the MEC formula that we are interested in here.

Versions of the MEC formula where the Reeb orbits are allowed to
degenerate are established in \cite{HM,LLW} with applications to
dynamics in mind. In these formulas, however, the contributions of
closed orbits are expressed in terms of certain local homology groups
associated with the orbits and are contact-geometrical in
nature. Here, in Theorem \ref{thm:mec}, we extend the result from
\cite{GK} to the degenerate case with the orbit contributions
computable via the linearized flow and a certain purely topological
invariant of the Poincar\'e return map. The latter invariant is
essentially the mean iterated index.

Two remarks concerning this result are due now.

First of all, it should be noted that the authors are not aware of any
example of a Reeb flow with finitely many closed Reeb orbits such that
not all of them are totally non-degenerate. (It is conceivable
that such Reeb flows do not exist, although proving this is likely to be
absolutely out reach at the moment.) Thus the degenerate case
of the local MEC formula (Theorem \ref{thm:mec}) appears to be of
little interest for the MEC calculations for specific
manifolds. However, it does have applications to dynamics. For
instance, it allows one to rule out certain orbit patterns and, as
a consequence, obtain lower bounds on the number of closed orbits (cf.\
\cite{HM,LL,WHL}); see Section \ref{sec:S^3} and a discussion below.

Secondly, our local MEC formula is identical to the one established in
\cite{HM} and also, apparently, to the one proved for the sphere in
\cite{LLW}. The difference lies in the interpretation or the
definition of the terms in the formula. Although our local MEC formula
could be directly derived from \cite[Theorem 1.5]{HM} (see Remark
\ref{rmk:chi}), we give, for the sake of completeness, a
rather short proof of the formula, still relying, however, on some of the
results from \cite{HM}; cf.\ \cite{GHHM}. In Section \ref{sec:mec}, we
also discuss in detail the definition of the MEC, examples, the local
and filtered contact homology and other ingredients of the proof of
Theorem \ref{thm:mec}, and state a variant of the asymptotic Morse
inequalities for contact homology (Theorem \ref{thm:morse})
generalizing some results from \cite{GK,HM}.
 
Finally, in the last section, we turn to applications of Theorem
\ref{thm:mec}. We give a new proof of the theorem that every Reeb flow
on the standard contact $S^3$ has at least two closed Reeb orbits; see
\cite{CGH,GHHM,LL}.  (In fact, a more general result is now known to
be true. Namely, the assertion holds for any contact
three-manifold. This fact, proved in \cite{CGH} using the machinery of
embedded contact homology, is out of reach of our methods.)  The
proofs in \cite{GHHM,LL} both rely on an analogue of the ``degenerate
case of the Conley conjecture'' for contact forms, asserting that the
presence of one closed Reeb orbit of a particular type (a so-called
symplectically degenerate minimum) implies the existence of infinitely
many closed Reeb orbits.  This result holds in all dimensions; see
\cite{GHHM}. Another non-trivial (and strictly three-dimensional)
ingredient in the argument in \cite{GHHM} comes from the theory of
finite energy foliations (see \cite{HWZ1,HWZ2}), while the argument in
\cite{LL} uses, also in a non-trivial way, the variant of the local
MEC formula from \cite{LLW} for degenerate Reeb flows on the standard
contact sphere. In this paper, we bypass the results from the theory
of finite energy foliations and give a very simple proof of Theorem
\ref{thm:S^3} utilizing Theorem \ref{thm:mec} and the ``Conley
conjecture'' type result mentioned above. The advantage of this
approach is that it minimizes the 3-dimensional counterparts of the
proof and, we hope, is the first step towards higher-dimensional
results. We also reprove, in slightly more general form, a theorem
from \cite{BL} that every Finsler metric on $S^2$ has at least two
closed geodesics. (Clearly, this also follows from \cite{CGH}.)

A word is due on the degree of rigor in this paper, which varies
considerably between its different parts. Section \ref{sec:it-index},
dealing with the iterated index, is of course completely rigorous.
The rest of the paper, just as \cite{GHHM}, heavily relies on the
machinery of contact homology (see, e.g., \cite{Bo,SFT} and references
therein), which is yet to be fully put on a rigorous basis (see
\cite{HWZ:SC,HWZ:poly}). Note however that, to get around the
foundational difficulties, one can replace here, following, e.g.,
\cite{CDvK,FSVK}, the linearized contact homology by the equivariant
symplectic homology, which carries essentially the same information (see
\cite{BO12}), at the expense of proofs getting somewhat more involved;
cf.\ Remarks \ref{rmk:cyl} and \ref{rmk:var}, and \cite{HM} vs.\
\cite{McL} or \cite{Es} vs.\ \cite{FSVK}.

\medskip
\noindent\textbf{Acknowledgements.} The authors are grateful to Alexander
Felshtyn, John Franks, Ba\c sak G\"urel, Umberto Hryniewicz, Janko
Latschev, Leonardo Macarini, John Milnor, and Otto van Koert for
useful discussions and remarks.

\section{Iterated index}
\label{sec:it-index}
In this section, we establish a few elementary results concerning the
count, with multiplicity and signs, of periodic orbits of smooth
maps. Although we feel that these results must be known in some form,
we are not aware of any reference; see however \cite{CMPY,Do}, and
also \cite[Chap.\ 3]{JM} and \cite[Sect.\ I.4]{Sm}, for related
arguments. Throughout the section, all maps are assumed to be at least
$C^1$-smooth unless explicitly stated otherwise.

\subsection{Iterated index of a map}
As a model situation, consider a $C^1$-smooth map $F\colon M\to M$, where
$M$ is a closed manifold. We are interested in an algebraic count of
periodic orbits of $F$. To state our main results, let us first review
some standard definitions and facts.

Denote by $\Fix(F)$ the set of the fixed points of $F$. Recall that
$x$ is a $\ka$-periodic point of $F$ if $F^\ka(x)=x$, i.e., $x\in
\Fix(F^\ka)$, and that $\tau$ is the \emph{minimal period} of $x$ if $\tau$
is the smallest positive integer such that $F^\tau(x)=x$. The
$\ka$-periodic orbit containing $x$ is the collection $\OO=\{x, F(x),
\ldots, F^{\ka-1}(x)\}$ of not necessarily distinct points (naturally
parametrized by $\Z/\ka\Z$). The minimal period is then the length
(i.e., the cardinality) of the orbit or, more precisely, of its image
in $M$. Note that necessarily~$\tau|\ka$.

The \emph{index} $I(F,x)$ of an isolated fixed point $x$ of $F$ is the
degree of the map $S^{n-1}\to S^{n-1}$ given, in a local chart
containing $x$, by
$$
z\mapsto \frac{z-F(z)}{\|z-F(z)\|}
$$
where $z$ belongs to a small sphere $S^{n-1}$ centered at $x$. It is
not hard to see that all points in a $\ka$-periodic orbit $\OO$ have
the same index as fixed points of $F^\ka$. Thus, setting
$I(F^\ka,\OO):=I(F^\ka,x)$ for any $x\in\OO$, we have the index
assigned to a $\ka$-periodic orbit. 
 
Furthermore, recall that a fixed point $x$ of $F$ is said to be
\emph{non-degenerate} if 1 is not an eigenvalue of the linearization
$DF_x\colon T_xM\to T_xM$ and that $F$ is called non-degenerate if all
its fixed points are non-degenerate.  To proceed, let us first assume
that all periodic points of $F$ are non-degenerate, i.e., all
iterations $F^\ka$ are non-degenerate. (As follows from (a part of)
the Kupka--Smale theorem, this is a $C^\infty$-generic condition; see,
e.g., \cite{AR,Sm}.)  Then the index $I(F^\ka,x)$ is equal to
$(-1)^m$, where $m$ is the number of real eigenvalues of $DF_x^\ka$ in
the range $(1,\, \infty)$.

\begin{Definition}
  \label{def:main} Let $x$ be a periodic point of $F$ with minimal
  period $\tau$. We say that $x$ is \emph{even (odd)} is the number of
  real eigenvalues of $DF^\tau_x \colon T_xM\to T_xM$ in the interval
  $(-\infty,\, -1)$ is even (odd). A $\ka$-periodic point $x$ with
  minimal period $\tau$ is said to be \emph{bad} if it is an even
  iteration of an odd point, i.e., $x$ is odd and the ratio $\ka/\tau$ is
  even. Otherwise, $x$ is said to be \emph{good}. A $\ka$-periodic
  orbit $\OO$ is bad (good) if one, or equivalently all, periodic points in
  $\OO$ are bad (good).
\end{Definition}

Alternatively, the difference between even and odd (or good and bad)
periodic points can be seen as follows. Let $x$ be a periodic point
with minimal period $\tau$. We can also view $x$ as a $\ka$-periodic
point for any positive integer $\ka$ divisible by $\tau$. Then
$I(F^\ka,x)=I(F^\tau,x)$ when $x$ is even and
$I(F^\ka,x)=(-1)^{1+\ka/\tau}I(F^\tau,x)$ when $x$ is odd. Finally, $x$,
viewed as a $\ka$-periodic point, is good or bad depending on whether
$I(F^\ka,x)=I(F^\tau,x)$ or not. Since all periodic points in a periodic
orbit have the same index, this definition extends to periodic orbits.

The terminology we use here is borrowed from the theory of contact
homology (see, e.g., \cite{Bo,SFT}). In dynamics, odd periodic orbits
are sometimes also referred to, at least for flows, as M\"obius orbits
(see \cite{CMPY}) or flip orbits. Furthermore, note that
the above discussion relies heavily on the fact that no root of unity
is an eigenvalue of $DF_x^k$ due to the non-degeneracy assumption.

As is well known, the Lefschetz number $I(F):=\sum_{x\in\Fix(F)}
I(F,x)$ is a homotopy invariant of $F$; see, e.g., \cite{Fr,JM}. (In
particular, $I(F)$ can be extended ``by continuity'' to all, not
necessarily non-degenerate, maps $F$.) This, of course, applies to
$F^\ka$ as well, and hence $I(F^\ka) = \sum_{x\in\Fix(F^\ka)}
I(F^\ka,x)$, the number of periodic points counted with signs, is also
homotopy invariant. However, the number of $\ka$-periodic orbits,
taken again with signs, is not homotopy invariant, i.e.,
$\sum_{\OO}I(F^\ka,\OO)$, where the summation extends to all (not
necessarily simple) $\ka$-periodic orbits, can vary under a
deformation of $F$. (An example is, for instance, the second iteration
of the period doubling bifurcation map in dimension one, starting with
an attracting fixed point with an eigenvalue in $(-1,\,0)$ and ending
with an odd repelling fixed point and an (even) attracting orbit of
period two, with the sum for $\ka=2$ changing from 1 before the
bifurcation to 0 afterwards; cf.\ \cite{YA}.)  However, this sum is
very close to being homotopy invariant and it becomes such once the
summation is restricted to good orbits only. Namely, assuming as above
that all periodic points of $F$ are non-degenerate, set
$$
I_\ka(F):=\sum_{\text{ good } \OO}I(F^\ka,\OO),
$$
where the sum is now taken over all good $\ka$-periodic orbits $\OO$
of $F$, not necessarily with minimal period $\ka$. We call $I_\ka(F)$
the \emph{iterated index} of $F$. In the example
of a period doubling bifurcation mentioned above, we have $I_2(F)=1$
before and after the bifurcation. We emphasize that $I_\ka(F)$ depends
in general not only on $F^\ka$, but separately on $\ka$ and $F$.

Let $\varphi$ be the Euler function, i.e., $\varphi(\ka)$ is the
number of positive integers which are smaller than $\ka$ and relatively
prime with $\ka$. By definition, $\varphi(1)=1$.

\begin{Theorem}
\label{thm:index-maps}
Assume, as above, that $F^\ka$ is non-degenerate. Then
\begin{equation}
\label{eq:index-maps}
I_\ka(F)=\frac{1}{\ka}\sum_{d|\ka} \varphi(\ka/d)I(F^d).
\end{equation}
\end{Theorem}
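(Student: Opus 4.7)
The plan is to prove the identity by a direct Möbius-type inversion, organizing orbits by their minimal period and separately tracking the contributions of even and odd orbits. The key inputs are two standard facts: the Lefschetz number of $F^d$ is the sum of the indices of its fixed points, and for a point $x$ with minimal period $\tau$ the index behaves under iteration as $I(F^d,x)=I(F^\tau,x)$ if $x$ is even, while $I(F^d,x)=\epsilon_{d/\tau}\,I(F^\tau,x)$ if $x$ is odd, with $\epsilon_k:=(-1)^{k+1}$ (this is exactly the content of the discussion following Definition~\ref{def:main}).

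First, for each divisor $\tau$ of $\ka$, group the periodic orbits of minimal period $\tau$ according to parity and set
\[
b_\tau^+:=\sum_{\OO\text{ even, min.\ per.\ }\tau}I(F^\tau,\OO),\qquad
b_\tau^-:=\sum_{\OO\text{ odd, min.\ per.\ }\tau}I(F^\tau,\OO).
\]
Since each orbit of minimal period $\tau$ consists of $\tau$ fixed points of $F^d$ whenever $\tau\mid d$, step one gives
\[
I(F^d)=\sum_{\tau\mid d}\tau\bigl(b_\tau^+ +\epsilon_{d/\tau}\,b_\tau^-\bigr).
\]
On the other hand, an orbit of minimal period $\tau\mid\ka$ is good as a $\ka$-periodic orbit precisely when it is even, or when it is odd and $\ka/\tau$ is odd; and for a good orbit $I(F^\ka,\OO)=I(F^\tau,\OO)$. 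Therefore
\[
I_\ka(F)=\sum_{\tau\mid\ka}\bigl(b_\tau^+ +\mathbf{1}_{\ka/\tau\text{ odd}}\,b_\tau^-\bigr).
\]

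Now substitute the expression for $I(F^d)$ into the right-hand side of \eqref{eq:index-maps} and swap the order of summation, writing $d=\tau e$ with $e\mid m:=\ka/\tau$:
\[
\frac{1}{\ka}\sum_{d\mid\ka}\varphi(\ka/d)I(F^d)
=\frac{1}{\ka}\sum_{\tau\mid\ka}\tau\Bigl[b_\tau^+\sum_{e\mid m}\varphi(m/e)+b_\tau^-\sum_{e\mid m}\epsilon_e\,\varphi(m/e)\Bigr].
\]
The first inner sum equals $m$ by the classical identity $\sum_{e\mid m}\varphi(m/e)=m$, contributing $b_\tau^+$ to the total. The remaining task is to evaluate $S(m):=\sum_{e\mid m}\epsilon_e\,\varphi(m/e)$ and show that it equals $m$ when $m$ is odd and $0$ when $m$ is even, which would supply exactly the coefficient $\mathbf{1}_{m\text{ odd}}$ of $b_\tau^-$ and complete the proof.

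This last combinatorial identity is the only genuine step and is the one I expect to require a small argument, though it is elementary. For $m$ odd all divisors $e$ are odd, so $\epsilon_e=1$ and $S(m)=m$ by the same classical identity. For $m$ even, write $S(m)=\sum_{e\mid m}\varphi(m/e)-2\sum_{e\mid m,\,e\text{ even}}\varphi(m/e)$; substituting $e=2e'$ in the second sum shows that it equals $\sum_{e'\mid m/2}\varphi((m/2)/e')=m/2$, so $S(m)=m-2\cdot(m/2)=0$, as desired. Matching coefficients of $b_\tau^+$ and $b_\tau^-$ on both sides yields the formula.
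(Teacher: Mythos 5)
Your proof is correct and follows the same overall strategy as the paper: both arguments reduce the identity to Euler's formula $\sum_{e\mid m}\varphi(m/e)=m$ together with the twisted version $\sum_{e\mid m}(-1)^{e}\varphi(m/e)=0$ for even $m$, the only structural difference being that you aggregate orbits into the quantities $b_\tau^{\pm}$ before comparing coefficients, whereas the paper matches contributions orbit by orbit. The one place where your argument genuinely diverges is the proof of the twisted identity: the paper factors $m=2^jc$ with $c$ odd and evaluates a telescoping sum using $\varphi(2^jb)=2^{j-1}\varphi(b)$, while you write $S(m)=\sum_{e\mid m}\varphi(m/e)-2\sum_{e\text{ even}}\varphi(m/e)$ and observe that the substitution $e=2e'$ turns the second sum into $\sum_{e'\mid m/2}\varphi((m/2)/e')=m/2$, so $S(m)=m-2(m/2)=0$. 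This is shorter and avoids any appeal to multiplicativity of $\varphi$; it is a clean improvement on that step, and the rest of your bookkeeping (the iteration rule for indices of even versus odd points, the count of $\tau$ fixed points per orbit, and the characterization of good orbits) is exactly as in the paper.
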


Since the right hand side of \eqref{eq:index-maps} is obviously
homotopy invariant, so is $I_\ka(F)$. Furthermore, it extends by
continuity to all $F$, not necessarily meeting the non-degeneracy
requirement. Namely, let $\tilde{F}$ be a small perturbation of $F$
such that all $\ka$-periodic points of $\tilde{F}$ are
non-degenerate. Then $I_\ka(F):=I_\ka(\tilde{F})$ is well defined,
  i.e., independent of the perturbation $\tilde{F}$.

\begin{Example}
  Assume that $F$ is homotopic to the identity. Then, so is $F^d$ for
  all $d$, and, by Euler's formula (see \eqref{eq:euler} below), we
  have $I(F^\ka)=I(F)=I_\ka(F)$ for all $\ka\geq 1$.
\end{Example}

\begin{proof}[Proof of Theorem \ref{thm:index-maps}]
  A $\ka$-periodic orbit $\OO$ with minimal period $\tau=|\OO|$
  contributes to $I_\ka(F)$ only when $\tau|\ka$, and to the
  individual terms on the right hand side of \eqref{eq:index-maps}
  when $\tau|d$. Set $\ka=\tau\ka'$ and $d=\tau d'$.

When $\OO$ is good, its contribution to $I_\ka(F)$ is equal to
$I(F^\ka,\OO)=I(F^\tau,\OO)$. On the other hand, its contribution to the right hand
side of \eqref{eq:index-maps} is
$$
\frac{1}{\ka}\sum_{\tau|d|\ka} \varphi(\ka/d)I(F^d,\OO)\tau
=\frac{1}{\ka'}\sum_{d'|\ka'}\varphi(\ka'/d')I(F^\tau,\OO)=I(F^\tau,\OO),
$$
where we use Euler's formula
\begin{equation}
\label{eq:euler}
\sum_{l|r}\varphi(l)=r;
\end{equation}
see, e.g., \cite[Theorem 63]{HW}.

Assume next that $\OO$ is bad. Then $\ka'=\ka/\tau$ is even and $\OO$,
viewed as a $\tau$-periodic orbit, is odd, and
$I(F^d,\OO)=(-1)^{1+d/\tau}I(F^\tau,\OO)$. Thus the contribution of $\OO$ to
$I_\ka(F)$ is zero and its contribution to the right hand side is
$$
\frac{1}{\ka}\sum_{\tau|d|\ka} \varphi(\ka/d)I(F^d,\OO)\tau
=-\frac{1}{\ka'}\sum_{d'|\ka'}\varphi(\ka'/d')(-1)^{d'}I(F^\tau,\OO)=0,
$$  
where we now use the fact that
\begin{equation}
\label{eq:identity}
\sum_{l|r}(-1)^{l}\varphi(r/l)=0
\end{equation}
for any positive even integer $r$.

Finally, \eqref{eq:identity} can be proved, for instance, as
follows. Set $r=2^mc$, where $c$ is odd and $m\geq 1$ since $r$ is
even. Then, regrouping the terms on the left hand side of
\eqref{eq:identity}, we have
$$
\sum_{l|r}(-1)^{l}\varphi(r/l)=\sum_{a|c} P(a),
$$
where
$$ 
P(a)=
(-1)^{2^ma}\varphi(c/a)+(-1)^{2^{m-1}a}\varphi(2c/a)+\ldots+(-1)^a\varphi(2^m
c/a),
$$
and it suffices to show that $P(a)=0$.
Set $b=c/a$ and recall that $\varphi(2^jb)=2^{j-1}\varphi(b)$
since $\varphi$ is multiplicative and $b$ is odd; \cite{HW}. When $m=1$, we clearly
have $P(a)=\varphi(b)-\varphi(b)=0$. Likewise, as is easy to see,
$$
P(a)=\varphi(b)+\varphi(b)+\ldots+2^{m-2}\varphi(b)-2^{m-1}\varphi(b)=0
$$
when $m\geq 2$. This completes the proof of the theorem.
\end{proof}

\begin{Remark}
\label{rmk:C^0-maps}
The fact that the iterated index is homotopy invariant enables one to
extend the definition of $I_\ka(F)$ to continuous maps $F\colon M\to
M$, when $M$ is still a smooth manifold, by setting
$I_\ka(F)=I_\ka(\tF)$ where $\tF$ is a smooth approximation of
$F$. Clearly, Theorem \ref{thm:index-maps} still holds in this case.
\end{Remark}

\subsection{Iterated index of a germ}
\label{sec:index-germ}
Consider now a germ of a $C^1$-smooth map $F$ at a
fixed point $x$, which is assumed to be isolated for all iterations
$F^\ka$ but not necessarily non-degenerate. Thus the index
$I(F^\ka,x)$ is defined and homotopy invariant as long as $x$
remains \emph{uniformly isolated}: $I(F^\ka_s,x)=\const$ when
$F_s$ varies smoothly or continuously with parameter $s$ and there
exists a neighborhood $U$ of $x$ such that $x$ is the only fixed point
of $F^\ka_s$ in $U$ for all $s$.  

For a given $\ka$, consider a sufficiently small
perturbation $\tilde{F}$ such that all $\ka$-periodic points of
$\tilde{F}$ are non-degenerate and set 
$$
I_\ka(F,x):=\sum_{\text{ good } \OO}I(\tilde{F}^\ka,\OO),
$$
where the sum is again taken over all good $\ka$-periodic orbits $\OO$ of
$\tilde{F}$. For instance, when $F$ and all its iterations are
non-degenerate, we have $I_\ka(F,x)=I(F,x)$ when $x$ is even or when
$x$ is odd and $\ka$ is odd, and $I_\ka(F,x)=0$ otherwise. In what follows, when the point $x$
is clear from the context, we will use the notation $I(F)$ and $I_\ka(F)$.

\begin{Theorem}
\label{thm:index-germ}
We have
\begin{equation}
\label{eq:index-germ}
I_\ka(F)=\frac{1}{\ka}\sum_{d|\ka} \varphi(\ka/d)I(F^d).
\end{equation}

\end{Theorem}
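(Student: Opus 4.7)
The plan is to reduce Theorem \ref{thm:index-germ} to the combinatorial identity already established in the proof of Theorem \ref{thm:index-maps}; the only new ingredient is the localization (excision) property of the fixed-point index together with the confinement of nearby periodic orbits near a uniformly isolated fixed point.

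First, using that $x$ is isolated for every iteration $F^d$ with $d|\ka$, I would fix an open neighborhood $U$ of $x$ whose closure contains $x$ as the only fixed point of $F^d$ for each divisor $d$ of $\ka$. Uniform isolation then yields an $\eps>0$ such that for every perturbation $\tilde F$ with $\|\tilde F-F\|_{C^1}<\eps$ and every $d|\ka$, excision of the fixed-point index gives
\[
I(F^d)=\sum_{y\in \Fix(\tilde F^d)\cap U} I(\tilde F^d,y).
\]
Shrinking $\eps$ and $U$ further, I would secure two additional properties: (i) every $d$-periodic point of $\tilde F$ lying in $U$ (for $d|\ka$) has its entire $\tilde F$-orbit inside $U$, which follows from continuity of $F$ at $x$ together with $C^0$-closeness of $\tilde F^j$ to $F^j$ for $0\le j<\ka$; and (ii) by a Kupka--Smale-type genericity argument within this $C^1$-neighborhood, all such $d$-periodic points of $\tilde F$ in $U$ are non-degenerate.

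With such a $\tilde F$ chosen, the right-hand side of \eqref{eq:index-germ} becomes
\[
\frac{1}{\ka}\sum_{d|\ka}\varphi(\ka/d)\sum_{y\in \Fix(\tilde F^d)\cap U} I(\tilde F^d,y),
\]
while $I_\ka(F)=\sum_{\text{good }\OO\subset U} I(\tilde F^\ka,\OO)$, where the sum runs over good $\ka$-periodic orbits of $\tilde F$ contained in $U$. At this point the argument from the proof of Theorem \ref{thm:index-maps} transplants verbatim: each orbit $\OO\subset U$ with minimal period $\tau|\ka$ contributes, via the same Euler-function manipulation based on \eqref{eq:euler}, the value $I(\tilde F^\tau,\OO)$ to both sides when $\OO$ is good, and zero to both sides when $\OO$ is bad thanks to \eqref{eq:identity}.

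The main obstacle I anticipate is the simultaneous control required in the first step: choosing $U$ and a perturbation $\tilde F$ so that excision holds for every $d|\ka$, no $\tilde F$-orbit through a periodic point in $U$ exits $U$, and every $d$-periodic point of $\tilde F$ in $U$ with $d|\ka$ is non-degenerate. This is technical but essentially standard once uniform isolation is in hand; once it is secured, the proof reduces to the same arithmetic used in the global setting.
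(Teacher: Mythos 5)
Your proposal is correct and follows essentially the same route as the paper: the paper omits the proof of Theorem \ref{thm:index-germ} precisely because it is word-for-word the orbit-by-orbit bookkeeping from Theorem \ref{thm:index-maps}, applied to the periodic orbits of a non-degenerate perturbation $\tilde F$ confined near $x$. The localization details you supply (excision giving $I(F^d)=\sum_{y}I(\tilde F^d,y)$ over fixed points near $x$, confinement of the perturbed orbits in $U$, and genericity of the perturbation) are exactly the standard facts the paper takes for granted.
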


We omit the proof of this theorem; for it is word-for-word
identical to the proof of Theorem \ref{thm:index-maps}. An
immediate consequence is

\begin{Corollary}
\label{cor:index-germ}
The iterated index $I_\ka(F,x)$ is well defined and homotopy invariant
as long as $x$ is a uniformly isolated fixed point of $F^\ka_s$. 
\end{Corollary}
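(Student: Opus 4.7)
The plan is to derive both assertions — well-definedness and homotopy invariance — as formal consequences of formula \eqref{eq:index-germ}, using the standard continuity, excision and homotopy invariance of the ordinary fixed-point index $I(F^d,x)$. The key observation is that the right-hand side of \eqref{eq:index-germ} depends only on the germs $F^d$ for $d\mid\ka$ at $x$, and not on the perturbation $\tilde F$ hidden in the definition of $I_\ka(F,x)$.

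For well-definedness, I would fix a small neighborhood $U$ of $x$ such that $x$ is the only fixed point of $F^\ka$ in $\bar U$, hence, since $\Fix(F^d)\subset\Fix(F^\ka)$ for every divisor $d\mid\ka$, also the only fixed point of each $F^d$ in $\bar U$. Given two perturbations $\tilde F$ and $\tilde F'$ that make all $\ka$-periodic points non-degenerate, I would apply Theorem \ref{thm:index-germ} locally to each, reading $I(\tilde F^d)$ and $I(\tilde F'^d)$ in \eqref{eq:index-germ} as the sums of fixed-point indices inside $U$ — which is precisely what the proof of Theorem \ref{thm:index-maps} computes. The standard excision/continuity property of the fixed-point index then gives $I(\tilde F^d)=I(F^d,x)=I(\tilde F'^d)$ as soon as the perturbations are $C^0$-close enough to $F$. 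Consequently $I_\ka(\tilde F)=I_\ka(\tilde F')$, and in fact
\[
I_\ka(F,x)=\frac{1}{\ka}\sum_{d\mid\ka}\varphi(\ka/d)\,I(F^d,x),
\]
with the right-hand side manifestly independent of the perturbation.

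For homotopy invariance, I would let $F_s$ be a family such that $x$ is the only fixed point of $F_s^\ka$ in some fixed neighborhood $U$, uniformly in $s$. The inclusion $\Fix(F_s^d)\subset\Fix(F_s^\ka)$ again implies that $x$ is uniformly isolated for each $F_s^d$, so the ordinary indices $I(F_s^d,x)$ are constant in $s$ by the standard homotopy invariance of the fixed-point index; the displayed formula above then propagates this invariance to $I_\ka(F_s,x)$. The only real subtlety — and the step I expect to require the most care — is the local-to-global bookkeeping in the well-definedness argument: Theorem \ref{thm:index-germ} is written for a single germ, whereas a non-degenerate perturbation $\tilde F$ typically breaks $x$ into several nearby periodic points of various minimal periods. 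The appropriate reading is that $I(F^d)$ on the right of \eqref{eq:index-germ} means the total fixed-point index of $\tilde F^d$ in $U$, and this quantity equals $I(F^d,x)$ by excision; once this identification is made, everything else is formal.
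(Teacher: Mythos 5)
Your proposal is correct and is essentially the paper's argument: the corollary is deduced by reading the right-hand side of \eqref{eq:index-germ} as $\frac{1}{\ka}\sum_{d\mid\ka}\varphi(\ka/d)I(F^d,x)$, which is perturbation-independent and homotopy invariant by the standard stability of the fixed-point index (using, as you note, that $\Fix(F_s^d)\subset\Fix(F_s^\ka)$ so uniform isolation for $F_s^\ka$ passes to all divisors). The paper states this as an immediate consequence of Theorem \ref{thm:index-germ}; your local-to-global bookkeeping is exactly the detail being suppressed there.
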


\begin{Remark}
\label{rmk:C^0-germs} As in the case of maps of smooth manifolds (see Remark
\ref{rmk:C^0-maps}), the definition of the iterated index $I_\ka(F,x)$
extends to continuous maps $F$ and Theorem \ref{thm:index-germ} remains
valid. In what follows, however, the requirement that $F$ is at least
$C^1$ becomes essential; cf.\ \cite{SS}.
\end{Remark}

Assuming that $x$ is isolated for all iterations $F^\ka$, consider the
sequence of indices $\iota_\ka=I(F^\ka)$. This sequence is bounded
(see \cite{SS}) and in fact periodic; see \cite{CMPY} and also, e.g.,
\cite{JM} and references therein. (It is essential here that $F$ is at
least $C^1$-smooth.) Moreover, there exists a finite collection of
positive integers $\CN$ with the following properties:
\begin{itemize}
\item[(i)] $1\in\CN$;
\item[(ii)] for any two elements $q$ and $q'$ in $\CN$, the least common
  multiple $\lcm(q,q')$ is also an element of $\CN$;
\item[(iii)] for any $\ka$, we have $\iota_\ka=\iota_{q(\ka)}$, where $q(\ka)$
    is the largest element in $\CN$ dividing~$\ka$. 
\end{itemize}

Thus the index sequence $\iota_\ka$ is $q_{\max}$-periodic, where
$q_{\max}=\max\CN$, and takes values in the set $\{ \iota_q\mid
q\in\CN\}$. The collection $\CN=\CN(F)$ is generated in the obvious sense by
the degrees of the roots of unity occurring among the eigenvalues of
the linearization $DF_x$ and, in addition, $2\in \CN$ when $x$ is
odd. (For instance, if all eigenvalues of $DF_x$ are equal to $1$, we
have $\CN=\{1\}$ and the sequence $\iota_\ka$ is constant; when none
of the eigenvalues is a root of unity, i.e., $x$ is non-degenerate for
all iterations, we have either $\CN=\{1\}$ or $\CN=\{1,\, 2\}$ depending on
whether $x$ is even or odd.)

Condition (iii) relating the sequence and the set $\CN$ satisfying
(i) and (ii) is particularly important, and we say that a sequence
$a_\ka$ is \emph{subordinated} to $\CN$ when (iii) holds: for any
$\ka$, we have $a_\ka=a_{q(\ka)}$, where $q(\ka)$ is the largest
element in $\CN$ dividing $\ka$. (Note that (iii)
is an easy consequence of the fact, implicitly contained in the proof of the
Shub--Sullivan theorem \cite{SS}, that $I(F^\ka)=I(F)$ wherever $\ka$
is relatively prime with all $q\in\CN$; cf.\ \cite{CMPY,GG:gap}.)

\begin{Theorem}
\label{thm:subordinate}
The sequence $I_\ka(F)$ is subordinated to $\CN$.
\end{Theorem}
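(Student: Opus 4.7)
The plan is to rewrite the formula from Theorem~\ref{thm:index-germ} so that the subordination of $\iota_\ka:=I(F^\ka)$ to $\CN$ becomes combinatorially visible. Using the identity $\varphi=\mathrm{id}*\mu$ (Dirichlet convolution), i.e., $\varphi(n)=\sum_{a|n}\mu(a)(n/a)$, and reindexing the resulting double sum by $b=ad$, the relation $\ka I_\ka(F)=\sum_{d|\ka}\varphi(\ka/d)\iota_d$ takes the cleaner form
$$
I_\ka(F)=\sum_{e|\ka}\frac{J(e)}{e},\qquad J(e):=\sum_{d|e}\mu(e/d)\iota_d.
$$
Once this reformulation is in place, the theorem follows immediately from the claim that $J(e)=0$ whenever $e\notin\CN$: granting this claim, only $e\in\CN$ contribute, and by the maximality of $q(\ka)$ together with the closure of $\CN$ under lcm one has $\{e\in\CN:e|\ka\}=\{e\in\CN:e|q(\ka)\}$, so both $I_\ka(F)$ and $I_{q(\ka)}(F)$ equal $\sum_{e\in\CN,\,e|q(\ka)}J(e)/e$.

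To establish the vanishing of $J(e)$ for $e\notin\CN$, I decompose $\iota_d=\sum_{q'\in\CN}\iota_{q'}\,\mathbf{1}_{q(d)=q'}$; by linearity it suffices to prove that, for every $q^*\in\CN$ dividing $e$,
$$
C(e,q^*):=\sum_{\substack{d|e\\ q(d)=q^*}}\mu(e/d)=0.
$$
The approach is to first treat the coarser ``cumulative'' sums
$$
A(q'):=\sum_{\substack{d|e\\ q(d)|q'}}\mu(e/d)=\sum_{\substack{q''\in\CN\\ q''|q'}}C(e,q''),
$$
from which the $C(e,q^*)$ are recovered by Möbius inversion on the finite poset $\CN_e:=\{q'\in\CN:q'|e\}$ ordered by divisibility. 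Rewriting ``$q(d)|q'$'' as ``no $q''\in\CN_e\setminus\CN_{q'}$ divides $d$'' and applying inclusion-exclusion, combined with the standard identity $\sum_{d|e,\,L|d}\mu(e/d)=[L=e]$, produces
$$
A(q')=\sum_{\substack{S\subseteq\CN_e\setminus\CN_{q'}\\ \lcm(S)=e}}(-1)^{|S|}.
$$
Since $\CN$ is closed under lcm and contains $1=\lcm(\emptyset)$, one has $\lcm(S)\in\CN$ for every $S$; hence if $e\notin\CN$ no $S$ contributes and $A(q')=0$ for all $q'\in\CN_e$. Möbius inversion then forces $C(e,q^*)=0$ and finishes the proof.

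The main conceptual step, and what I expect to be the sole genuine obstacle, is the first one: recognizing that the right object to analyze is not $I_\ka(F)$ itself but the Möbius transform $J(e)$ of the index sequence, together with the observation that $J$ is supported on $\CN$. Once that identification is made, the combinatorial part is a routine unravelling of Möbius inversion and inclusion-exclusion that uses nothing more about $\CN$ than its closure under lcm.
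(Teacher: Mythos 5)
Your proof is correct and is essentially the paper's argument in different coordinates: your M\"obius transform $J$ is precisely the coordinate vector of the index sequence $\iota_\ka$ in the paper's basis $\delta(q)_\ka=\mathbf{1}[q\mid\ka]$ of the space $\R^{\CN}$ of subordinated sequences, your identity $I_\ka=\sum_{e\mid\ka}J(e)/e$ is the paper's diagonalization $\Phi\big(\delta(q)\big)=\delta(q)/q$ (both resting on the same Euler/M\"obius identity for $\varphi$), and your inclusion--exclusion proof that $J$ is supported on $\CN$ is a more laborious verification of the fact, which the paper treats as immediate, that a sequence subordinated to $\CN$ lies in the span of $\{\delta(q):q\in\CN\}$. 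Both arguments invoke the lcm-closure of $\CN$ at exactly the same point, so the proposal is a valid but not genuinely different route.
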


In particular, it follows that $I_\ka(F)$ is also periodic with
period $q_{\max}$.

\begin{proof}
The result is a formal consequence of Theorem
\ref{thm:index-germ}. Namely, let $a_\ka$ be any sequence subordinated
to a finite set $\CN$ satisfying (i) and (ii), and let $b_\ka$ be
obtained from $a_\ka$ via \eqref{eq:index-germ}:
$$
b_\ka=\frac{1}{\ka}\sum_{d|\ka} \varphi(\ka/d)a_d.
$$
Then we claim that $b_\ka$ is also subordinated to $\CN$.

At this point the fact that the original sequences are integer-valued
becomes inessential, and it is more convenient to assume that the
sequences in question are real. We can view the transform
$\{a_\ka\}\mapsto \{b_\ka\}$ as a map $\Phi$ from the vector space $\R^\CN$
of all real sequences subordinated to $\CN$ to the vector space of all
sequences. Our goal is to show that $\Phi$ actually sends $\R^\CN$ to
itself.

Consider the basis $\delta(q)$, $q\in\CN$, of $\R^\CN$, where
$\delta(q)_\ka=1$ when $q|\ka$ and $\delta(q)_\ka=0$ otherwise. Then,
as we will prove shortly, $\Phi$ is diagonal in this basis with entries
$1/q$, i.e., $\Phi\big(\delta(q)\big)=\delta(q)/q$. In particular, $\Phi\colon
\R^\CN\to\R^\CN$ and the theorem follows.

By definition,
$$
\Phi\big(\delta(q)\big)_\ka =\frac{1}{\ka}\sum_{d|\ka}
\varphi(\ka/d)\delta(q)_d.
$$
The only non-zero terms on the right hand side are those with $q|d$.
Thus, when $\ka$ is not divisible by $q$, the right hand side
is zero and hence $\Phi\big(\delta(q)\big)_\ka=0$. When $q|\ka$, set
$\ka=q\ka'$ and $d=q d'$ as in the proof of Theorem \ref{thm:index-maps}. Then
$$
\Phi\big(\delta(q)\big)_\ka =\frac{1}{q\ka'}\sum_{d'|\ka'}
\varphi(\ka'/d')=\frac{1}{q},
$$
where we again used \eqref{eq:euler}. This completes the proof of the theorem.
\end{proof}

\begin{Remark}[Integrality]
  In addition to being subordinated to $\CN$, the sequence
  $\iota_\ka=I(F^\ka)$ is known to satisfy some further
  conditions. Namely, \cite[Theorem 2.2]{CMPY} asserts that, in the
  notation from the proof of Theorem \ref{thm:subordinate}, the
  sequence $\iota_\ka$ is an integral linear combination of the
  sequences $q\delta(q)$, where $q\in\CN$. (Furthermore, the
  coefficients must meet certain ``sign-reversing'' constraints if $x$
  is odd.)  Moreover, it follows that
  the mean index (the $\varphi$-index in the terminology of
  \cite{CMPY}) is an integer:
$$
\bar{\iota}=\lim_{N\to\infty}\frac{1}{N}\sum_{\ka=1}^N\iota_\ka
=\frac{1}{q_{\max}}\sum_{\ka=1}^{q_{\max}}\iota_\ka\in\Z;
$$
see \cite[Corollary 2.3]{CMPY}.

Arguing as in the proof of Theorem \ref{thm:subordinate}, one can
derive \cite[Theorem 2.2]{CMPY} from the fact that the right hand side
of \eqref{eq:index-germ} is an integer by the definition of $I_\ka(F)$,
and conversely \cite[Theorem 2.2]{CMPY} implies that the right hand
side of \eqref{eq:index-germ} is an integer since $\Phi(q\delta(q))=\delta(q)$.
\end{Remark}

\begin{Example} 
\label{ex:2D}
Assume that $2n=2$ and $F$ is elliptic with eigenvalues $e^{\pm 2\pi
  i\alpha}$, where $\alpha$ is rational: $\alpha=p/q$ with $p$ and $q$
relatively prime and $q\geq 2$. Then it follows from \cite[Theorem
2.2]{CMPY} or our Theorem \ref{thm:subordinate} that $I(F^\ka)=1-rq$
when $q | \ka$ and $I(F^\ka)=1$ otherwise. Thus $I_\ka(F)=1-r$ when $q
| \ka$ and $I_\ka(F)=1$ otherwise. In addition, when $F$ is area
preserving, $I(F^\ka)\leq 1$ (see \cite{Ni,Si}), and hence $r\geq 0$
in this case. There are no other restrictions on the index sequence
when $2n=2$: all values of $r$ and $q$ do occur. For instance, in the
area preserving case, one can take as $F$ the composition of the rotation
$e^{2\pi i/q}$ with the flow of the ``monkey saddle'' Hamiltonian $\Re
(z^{qr})$.  One immediate consequence of this result, otherwise
entirely non-obvious, is that $I(F^\ka)\neq 0$ for all $\ka$, when
$I(F)\neq 0$. (Remarkably, under some additional assumptions, these
facts remain true for homeomorphisms; see \cite{LeC,LeCY} and
references therein.)
\end{Example}

\section{Mean Euler characteristic}
\label{sec:mec}

\subsection{Notation and conventions}
\label{sec:setting}
Let $(M^{2n-1},\xi)$ be a closed contact manifold strongly fillable by
an exact symplectically aspherical manifold. In other words, we require
$(M,\xi)$ to admit a contact form $\alpha$ such that there exists an exact
symplectic manifold $(W,\omega=d\alpha_W)$ with $M=\p W$
(with orientations) and $\alpha_W|_M=\alpha$ and $c_1(TW)=0$. Then
the linearized contact homology $\HC_*(M,\xi)$ is defined and
independent of $\alpha$; see, e.g.,
\cite{Bo,SFT}. Although $\HC_*(M,\xi)$ depends in general on the filling
$(W,\omega)$, we suppress this dependence in the
notation. Moreover, when $M$ is clear from the context, we will
simply write $\HC_*(\xi)$.

When $\alpha$ is non-degenerate, $\HC_*(\xi)$ is the homology of a
complex $\CC_*(\alpha)$ generated over $\Q$ by the good closed, not
necessarily simple, Reeb orbits $x$ of $\alpha$, where an orbit is
said to be good/bad depending on whether the corresponding fixed point
of the Poincar\'e return map is good/bad. The complex is graded by the
Conley--Zehnder index up to a shift of degree by $n-3$, i.e.,
$|x|=\MUCZ(x)+n-3$. The exact nature of the differential on
$\CC_*(\alpha)$ is inessential for our considerations.

The complex $\CC_*(\alpha)$ further breaks down into a direct sum of
sub-complexes $\CC_*(\alpha;\gamma)$ generated by the closed Reeb
orbits in the free homotopy class $\gamma$ of loops in $W$.  When
$\gamma\neq 0$, fixing grading (or, equivalently, a way to evaluate the
Conley--Zehnder index of $x$) requires fixing an extra structure. A
convenient choice of such an extra structure in our setting is a
non-vanishing section $\fs$, taken up to homotopy, of the square of
the complex determinant line bundle $\big(\bigwedge_\C
TW\big)^{\otimes 2}$; see \cite{Es}. (The section $\fs$ exists since
$c_1(TW)=0$.)  With this choice, for every closed Reeb orbit $x$, its
Conley--Zehnder index $\MUCZ(x)$ and the mean index $\Delta(x)$ are
well-defined. Namely, the indices of $x$ are evaluated using a
(unitary) trivialization of $x^*TW$ such that the square of its top
complex wedge is $\fs|_x$. Such a trivialization is unique up to
homotopy.  Moreover, the mean index is homogeneous with respect to the
iteration, i.e., $\Delta(x^\ka)=\ka\Delta(x)$. We refer the reader to
\cite{Es} for a very detailed discussion of the mean index in this
context and for further references, and also to \cite{Lo,SZ}. Here we
only mention that the mean index $\Delta(x)$ measures the total
rotation of certain eigenvalues on the unit circle of the linearized
Poincar\'e return map of the Reeb flow at $x$. In general, the grading
does depend on the choice of $\fs$.

Finally, let us also fix a collection $\Gamma$ of free homotopy
classes in $W$ closed under iterations, i.e., such that $\gamma^\ka\in
\Gamma$ whenever $\gamma\in\Gamma$. (For instance, we can have
$\Gamma=\{0\}$ or $\Gamma$ can be the entire collection of free
homotopy classes.)  In what follows, we focus on the homology
$\bigoplus_\Gamma\HC_*(\xi,\gamma)$. We again suppress the dependence
of the homology on $\fs$ and $\Gamma$ (and on $W$) in the notation and
$\HC_*(\xi)$.

Furthermore, assume that 
\begin{itemize}
\item[(CH)]  there are two integers $l_+$ and $l_-$ such that  the space 
$\HC_l(\xi)$ is finite-dimensional for
$l\geq l_+$ and $l\leq l_-$.  
\end{itemize} 
In all examples considered here the contact homology is finite
dimensional in all degrees and this condition is automatically
met. Set
\begin{equation}
\label{eq:Euler1}
\chi^\pm(\xi)=\lim_{N\to\infty}\frac{1}{N}
\sum_{l=l_\pm}^N(-1)^l\dim \HC_{\pm l}(\xi),
\end{equation}
provided that the limits exist.  We call $\chi^\pm(\xi)$ the
\emph{positive/negative mean Euler characteristic (MEC)} of
$\xi$. (Invariants of this type for contact manifolds are originally
introduced and studied in \cite[Section 11.1.3]{VK}; see also
\cite{Es,GK,Ra}.) Note that $\chi^\pm(\xi)$ depends of course on
$\Gamma$ and also, when $\Gamma\neq \{0\}$, on $\fs$. (However, when,
say, $M$ is simply connected, it is not hard to see that
$\chi^\pm(\xi)$ is independent of the filling under some natural
additional conditions on $\xi$; cf.\ \cite{CDvK,VK:br,OV}.)

Assume now that the dimensions
of the contact homology spaces remain bounded as $l\to\pm\infty$:
\begin{equation}
\label{eq:bound}
\dim_\Q \HC_l(\xi)\leq \const \text{ when } |l|\geq l_\pm.
\end{equation}
Then, although the limit in \eqref{eq:Euler1} need not exist, one can
still define $\chi^\pm(\xi)$ as, e.g., following \cite{CDvK},
$$
\chi^\pm(\xi)=\frac{1}{2}\left[
\limsup_{N\to\infty}\frac{1}{N}
\sum_{l=l_\pm}^N(-1)^l\dim \HC_{\pm l}(\xi)
+
\liminf_{N\to\infty}\frac{1}{N}
\sum_{l=l_\pm}^N(-1)^l\dim \HC_{\pm l}(\xi)
\right].
$$

Finally, let us also point out that the machinery of contact
homology used in this section is yet to be fully put on a rigorous
basis and the foundations of the theory is still a work in progress;
see \cite{HWZ:SC,HWZ:poly}.

\begin{Remark}
\label{rmk:cyl}
One can also use the cylindrical contact homology, when it exists, to
define the MEC of a contact manifold $(M,\xi)$. The construction is
similar to the one for the linearized contact homology with obvious
modifications. For instance, in the cylindrical case, $\fs$ is a
non-vanishing section of the line bundle
$\big(\bigwedge_\C\xi\big)^{\otimes 2}$ and the collection $\Gamma$ is
formed by free homotopy classes of loops in $M$. Of course, the two
definitions give the same result when the cylindrical and contact
homology groups are equal or when the dimension of the contact complex
is bounded as a function of the degree and the MEC can be evaluated
using the complex rather than the homology. This is the case, for
example, in the setting of Theorem \ref{thm:mec}.

Alternatively, the MEC can be defined using the positive equivariant
symplectic homology (see \cite{CDvK,FSVK}), resulting in an invariant
(somewhat hypothetically) equal to the one obtained using the
linearized contact homology; \cite{BO12}. Note that this approach
bypasses the foundational difficulties related to contact homology but
usually results in somewhat more involved proofs and calculations;
cf.\ Remark \ref{rmk:var}.  Finally, variants of the MEC exist and
have been used for ``classical'' homology theories; see, e.g.,
\cite{LLW,Ra,WHL}.

\end{Remark}

\subsection{Examples}
\label{sec:examples}
In this section we briefly review, omitting the proofs, some examples
where the MEC is not difficult to determine. 

\begin{Example}[The Standard Sphere] 
\label{eq:xi_0}
For the standard contact structure $\xi_{0}$ on $S^{2n-1}$, the
contact homology is one-dimensional in every even degree starting with
$2n-2$; see, e.g., \cite{Bo}. Thus, in this case, $\chi^+=1/2$ and
$\chi^-=0$. Alternatively, one can use here the Morse--Bott approach
as in Example \ref{ex:pre-quant}; see \cite{Es}.
\end{Example}

\begin{Example}[The Ustilovsky Spheres] In \cite{U}, Ustilovsky
  considers a family of contact structures $\xi_p$ on $S^{2n-1}$ for
  odd $n$ and positive $p\equiv \pm 1\mod 8$.  For a fixed $n$, the
  contact structures $\xi_p$ fall within a finite number of homotopy
  classes, including the class of the standard structure $\xi_0$.  
The contact homology $\HC_*(\xi_p)$ is computed in \cite{U},
and it is not hard to see that in this case
\begin{equation}
\label{eq:ust}
\chi^+(\xi_p)=\frac{1}{2}
\left(
\frac{p(n-1)+1}{p(n-2)+2}
\right)
\end{equation}
and $\chi^-(\xi_p)=0$; see \cite{VK:br,VK} and also \cite{Es,GK}
for a different approach. The right-hand side of \eqref{eq:ust} is a
strictly increasing function of $p>0$.  Hence, $\chi^+$ distinguishes
the structures $\xi_p$ with $p>0$.  Note also that
$\chi^+(\xi_p)>\chi^+(\xi_0)=1/2$ when $p>1$ and
$\chi^+(\xi_1)=1/2$. In particular, $\chi^+$ distinguishes $\xi_p$
with $p>1$ from the standard structure $\xi_0$.
\end{Example}

\begin{Example}[Pre-quantization Circle Bundles] 
\label{ex:pre-quant}
Let $\pi\colon M^{2n-1}\to B$ be a prequantization circle bundle over
a closed strictly monotone symplectic manifold $(B,\omega)$. In other words,
$M$ is an $S^1$-bundle over $B$ with $c_1=[\omega]$, i.e., we have
$\pi^*\omega = d\alpha$, where $\alpha$ is a connection form on $M$
and we use a suitable identification of the Lie algebra of $S^1$ and
$\R$; see, e.g., \cite[Appendix A]{GGK}. As is well known, $\alpha$ is
a contact form.  Assume first for the sake of simplicity that the
fiber $x$ of $\pi$ is contractible. Then, for $\Gamma=\{0\}$, we have
$\chi^-(\xi)=0$ and
$$
\chi^+(\xi)=\frac{\chi(B)}{2\left<c_1(TB),u\right>},
$$
where $u\in\pi_2(B)$ is the image of a disk bounded by $x$ in $M$ and
$\chi(B)$ is the ordinary Euler characteristic of the base $B$. (Here
we are using the cylindrical contact homology of $(M,\xi)$ rather than
the linearized contact homology because the natural filling of $M$ by
the disk bundle is not exact or even symplectically aspherical.) This
is an easy consequence, for instance, of the Morse--Bott version of
\eqref{eq:mec} and of the fact that, essentially by definition, the
denominator is $\Delta(x)$; see \cite[Example 8.2]{Es}. Furthermore,
since $x$ is contractible, $\left<\omega,u\right>=1$ and $B$ is
monotone, $\left<c_1(TB),u\right>$ is the minimal Chern number $N$ of
$B$; cf. \cite[p.\ 100]{Bo:thesis}. To summarize, we have
\begin{equation}
\label{eq:mec-cot}
\chi^+(\xi)=\frac{\chi(B)}{2N}.
\end{equation}
If $x$ is not contractible, the above calculation still holds for
$\Gamma=\{0\}$. More generally, when the order $r$
of the class $[x]$ in $\pi_1(M)$ is finite, for the collection
$\Gamma$ generated by $[x]$ and any $\fs$, we have
$\chi^+(\xi)=r\chi(B)/(2N)$.  When $B$ is negative monotone, the roles
of $\chi^+$ and $\chi^-$ are interchanged. (See \cite{CDvK} for far
reaching generalizations of this calculation.) 
\end{Example}

\begin{Example}[The Unit Cotangent Bundle of $S^n$] Let $(M,\xi)$ be
  the unit cotangent bundle $ST^*S^n$ with the standard contact
  structure $\xi$. A Riemannian metric on $S^n$ gives rise to a
  contact form $\alpha$ on $M$ and, for the standard round metric,
  $\alpha$ is a prequantization contact form as in Example
  \ref{ex:pre-quant}, where $B=\Gr^+(2,n+1)$ is the real ``oriented''
  Grassmannian. (The closed geodesics are the oriented great circles
  on $S^n$, i.e., the intersections of $S^n$ with the oriented
  2-planes in $\R^{n+1}$.) It is not hard to see that
  $\chi(B)=2\lfloor (n+1)/2 \rfloor$. Furthermore, since the minimal
  Chern number $N$ of a (simply connected) hypersurface of degree $d$
  in $\CP^{m+1}$ is $m+2-d$ and since $B$ is a complex quadric in
  $\CP^n$, we have $N=n-1$; cf.\ \cite[p.\ 88]{LM} and \cite[Example
  4.27 and Exercise 6.20]{MS}. Thus $\chi^-(\xi)=0$ and
  $\chi^+(\xi)=1/2+ 1/(n-1)$ if $n\geq 3$ is odd and $\chi^+(\xi)=1/2+
  1/2(n-1)$ when $n\geq 2$ is even.  When $n=2$ we have
  $\chi^+(\xi)=1/2$ for $\Gamma=\{0\}$ and $\chi^+(\xi)=1$ for
  $\Gamma=\pi_1(M)=\Z_2$. Alternatively, $\chi^\pm(ST^*S^n)$ can be
  calculated directly via contact homology; see \cite{VK,VK:br}. Note
  that here one can use either the cylindrical contact homology or the
  linearized contact homology with the filling $W$ of $M$ by the unit
  ball bundle in $T^*S^n$. (The only modification that is needed in
  the latter case is that when $n=2$, the fiber $x$ is contractible in
  $W$ and $\chi^+(\xi)=1$ for $\Gamma=\{0\}$.)

\end{Example}

We refer the reader to \cite{CDvK,Es,VK,VK:br} for further examples.  

\subsection{Local formula for the MEC} Let now $x$ be a (simple)
closed orbit of the Reeb flow of $\alpha$, which we assume to be
isolated for all iterations, i.e., all iterated orbits $x^\ka$ are
isolated. Note that these orbits are not required to be
non-degenerate. The Poincar\'e return map $F$ of $x$ is a germ of a
smooth map with an isolated fixed point which we also denote by
$x$. Clearly, the Poincar\'e return map of the iterated orbit $x^\ka$
is just $F^\ka$.

Since the fixed point $x$ of $F$ is isolated for all iterations,
the iterated indices $I_\ka(F,x)$ are defined for all $\ka$ and
the sequence $I_\ka(F,x)$ is periodic with period
$q_{\max}$; see Section \ref{sec:index-germ}.

Set the \emph{mean iterated index} of $x$ to be
$$
\sigma(x)=\frac{1}{q_{\max}}\sum_{\ka=1}^{q_{\max}} I_\ka(F,x)
=\lim_{N\to\infty}\frac{1}{N}\sum_{\ka=1}^{N} I_\ka(F,x).
$$
This is a diffeomorphism (and even homeomorphism) invariant of the flow near the orbit
$x$. When $x$ and all its iterations are non-degenerate, we have
$\sigma(x)=I(F,x)$ when $x$ is even and $\sigma(x)=I(F,x)/2$ when $x$
is odd. (A simple closed Reeb orbit is said to be even/odd depending on
whether the fixed point $x$ is even/odd for $F$.)

Recall that $(M^{2n-1},\xi)$ is a closed contact manifold strongly
fillable by an exact symplectically aspherical manifold 
and that a filling $(W,\omega)$, the homotopy class of the section $\fs$ and the
collection $\Gamma$ are fixed, and hence, in particular, we have the
graded space $\HC_*(\xi)$ and the mean indices of the orbits
unambiguously defined.

\begin{Theorem}
\label{thm:mec}
Assume that the Reeb flow of $\alpha$ on $(M^{2n-1},\xi)$ has only
finitely many simple periodic orbits $x_1,\ldots,x_r$, not necessarily
non-degenerate, in the collection $\Gamma$. Then the conditions (CH)
and \eqref{eq:bound} are satisfied with $l_+=2n-3$ and
$l_-=3$. Furthermore, the limit in \eqref{eq:Euler1} exists for both
the positive and negative MEC of $(M,\xi)$, and
\begin{equation}
\label{eq:mec}
\chi^\pm(\xi)={\sum}^\pm\frac{\sigma(x_i)}{\Delta(x_i)},
\end{equation}
where ${\sum}^\pm$ stands for the sum taken over the orbits $x_i$ with
positive/negative mean index $\Delta(x_i)$.
\end{Theorem}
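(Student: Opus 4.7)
The proof plan is to decompose the contact chain complex of a small non-degenerate perturbation $\tilde\alpha$ of $\alpha$ into clusters indexed by the simple orbits $x_i$ and their iterations $x_i^\ka$, identify the signed count of good orbits in each cluster with the iterated index of Section \ref{sec:it-index}, and then average over $\ka$.

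First I would fix a large degree cutoff $N$ and choose a small non-degenerate perturbation $\tilde\alpha$ of $\alpha$, supported in arbitrarily small neighborhoods of the iterated orbits $x_i^\ka$ for $\ka$ up to $N/\min_i|\Delta(x_i)|$. Since every $x_i$ is uniformly isolated for $F_i^\ka$, a sufficiently small perturbation $\tilde F_i$ of the Poincar\'e return map $F_i$ has only non-degenerate $\ka$-periodic points in a fixed neighborhood of $x_i$ for all relevant $\ka$. The chain generators of $\CC_*(\tilde\alpha)$ then split into clusters $\mathcal{C}_i^\ka$, where $\mathcal{C}_i^\ka$ consists of those iterated simple orbits $y^m$ of $\tilde\alpha$ near $x_i$ for which $m\cdot|\OO_y|=\ka$, with $\OO_y$ the corresponding simple periodic orbit of $\tilde F_i$ near $x_i$.

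Next I would use the local contact homology of \cite{HM} (essentially \cite[Theorem 1.5]{HM}, cf.\ Remark \ref{rmk:chi}) to compute the Euler characteristic of each cluster. Up to a universal sign $\epsilon_n \in \{\pm 1\}$ arising from the grading convention $|y| = \MUCZ(y) + n - 3$, the key identity is
\begin{equation*}
\sum_{y \in \mathcal{C}_i^\ka \text{ good}} (-1)^{|y|} = \epsilon_n \cdot I_\ka(F_i, x_i).
\end{equation*}
The left-hand side is the contribution of $\mathcal{C}_i^\ka$ to the Euler characteristic of $\CC_*(\tilde\alpha)$; the right-hand side is the iterated index from Section \ref{sec:index-germ}, which by definition is the signed count of good $\ka$-periodic orbits of $\tilde F_i$. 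The identity reflects the natural bijection between good orbits in the cluster and good $\ka$-periodic orbits of $\tilde F_i$ together with the compatibility of the Conley--Zehnder-based sign with the topological fixed-point index.

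Finally I would assemble the MEC. Each cluster $\mathcal{C}_i^\ka$ contributes only to degrees in a bounded window around $\ka\Delta(x_i) + n - 3$, so for each $x_i$ with $\Delta(x_i) > 0$, the clusters with $\ka \leq N/\Delta(x_i)$ account, up to a bounded error, for the contribution of $x_i$ to $\sum_{l=l_+}^{N} (-1)^l \dim \HC_l(\xi)$, while orbits with $\Delta(x_i) \leq 0$ contribute a bounded amount. Summing over $\ka$ and using the periodicity of $I_\ka(F_i, x_i)$ (Theorem \ref{thm:subordinate}) together with the definition of $\sigma(x_i)$ yields
\begin{equation*}
\lim_{N\to\infty} \frac{1}{N} \sum_{l=l_+}^{N} (-1)^l \dim \HC_l(\xi) = \epsilon_n \sum_{\Delta(x_i) > 0} \frac{\sigma(x_i)}{\Delta(x_i)},
\end{equation*}
with the symmetric argument yielding $\chi^-(\xi)$. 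The existence of both limits and the conditions (CH) and \eqref{eq:bound} follow from the periodicity of the cluster dimensions in $\ka$ and the finiteness of the orbits $x_i$. Comparison with the non-degenerate case treated in \cite{GK} forces $\epsilon_n = +1$, giving \eqref{eq:mec}. The hardest step is the cluster Euler-characteristic identity itself, which requires carefully aligning the grading conventions of \cite{HM} with the topological iterated index of Section \ref{sec:it-index}; once this is in place, the averaging argument is routine.
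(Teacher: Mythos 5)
Your overall strategy is the same as the paper's: decompose the generators near each $x_i^\ka$ into clusters, identify the signed count of good orbits in a cluster with $I_\ka(F_i,x_i)$ (this is property (LC2) of the local contact homology in the paper, and your sign-calibration against the non-degenerate case is an acceptable way to fix the convention), and then average using (LC1) and the periodicity of $I_\ka(F_i,x_i)$. However, there is a genuine gap in how you control the \emph{unsigned} dimensions. You justify (CH), \eqref{eq:bound}, and the ``bounded error'' in the truncated Euler characteristic by ``the periodicity of the cluster dimensions in $\ka$.'' If by cluster dimension you mean the number of chain generators near $x_i^\ka$, this is not even well defined independently of the perturbation; if you mean $\dim\HC_*(x_i^\ka)$, its periodicity (or subordination to $\CN$) is precisely what the paper states as an open conjecture in Remark \ref{rmk:homology}. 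Only the signed count $\chi(x_i^\ka)=I_\ka(F_i,x_i)$ is known to be periodic. The input actually needed is (LC3): the uniform bound on $\dim\HC_*(x_i^\ka)$ over all $\ka$, a non-trivial Gromoll--Meyer-type theorem imported from \cite{HM} (resting on \cite{GG:gap}). Without it, the finitely many clusters contributing to a fixed degree $l$ could each contribute unboundedly as $l\to\infty$, so \eqref{eq:bound} fails, and the boundary terms in the degree truncation $l_+\le l\le N$ cannot be estimated, so the averaging step does not close.

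A secondary looseness: a perturbation $\tilde\alpha$ supported near the orbits $x_i^\ka$ with $\ka$ bounded does not make the higher iterates non-degenerate, so $\CC_*(\tilde\alpha)$ is not literally a well-defined complex computing $\HC_*(\xi)$. The paper circumvents this by working with the action-filtered contact homology $\HC_*^a$, using (LC4) and the long exact sequence to localize the Euler characteristic, and then passing to the limit $a\to\infty$ via a diagonal argument (Lemma \ref{lemma:limit}), which requires changing the perturbation as $a$ grows. Your degree cutoff $N$ plays a similar role informally, but to make the identification with $\HC_*(\xi)$ rigorous you need the filtered framework (or an equivalent), not a single global perturbation.
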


This result generalizes the non-degenerate case of \eqref{eq:mec} 
proved in \cite{GK} and inspired by \cite{Vi:res} where
such a formula was established for
$(S^{2n-1},\xi_{0})$. A variant of
\eqref{eq:mec} for convex hypersurfaces in $\R^{2n}$ is proved in
\cite{WHL}. The general form of \eqref{eq:mec} given here is
literally identical to a MEC formula obtained in \cite{HM} (see
\cite[Theorem 1.5]{HM}), and probably to the one established in
\cite{LLW} for $(S^{2n-1},\xi_0)$, as can be seen by comparing the
proofs. The main difference lies in the definitions of the numerators:
in \cite{HM}, $\sigma(x_i)$ is defined as the MEC of the local contact
homology of $x_i$, which is a contact invariant, while here
$\sigma(x_i)$ is defined purely topologically as a diffeomorphism and
even homeomorphism invariant of the Poincar\'e return map. (Likewise,
in \cite{LLW}, the numerators are defined via certain local homology
associated with $x_i$.) Finally note that the Morse--Bott version of
\eqref{eq:mec} and its variant for the so-called asymptotically
finite contact manifolds are proved in \cite{Es}.  (The Morse--Bott
version for the geodesic flows is originally established in \cite{Ra}.)

\begin{Remark}
\label{rmk:conditions_mec}
The condition of Theorem \ref{thm:mec} that $\omega$ is exact on $W$
can be slightly relaxed. Namely, when $\Gamma=\{0\}$, it suffices to
assume that $\omega|_{\pi_2(W)}=0$. In general, when
$\Gamma\neq\{0\}$, it is enough to require $\omega$ to be atoroidal;
cf.\ \cite{GHHM}. However, the assumption that $c_1(TW)=0$ or at least
that $c_1(TW)|_{\pi_2(W)}=0$ appears to be essential.
\end{Remark}

\subsection{Preliminaries and the proof} Our
goal in this section is to prove Theorem \ref{thm:mec}. To this end, we
need first to recall several definitions and results concerning filtered
and local contact homology following mainly \cite{HM} and
\cite{GHHM}. Throughout the section, we continue to work in the
setting of Section \ref{sec:setting}. In particular, the contact
manifold $(M,\xi)$ is fixed as are the background
structures used in the definition of the contact homology (the filling $W$,
the section $\fs$ and the collection $\Gamma$). A contact form,
$\alpha$, is always assumed to support $\xi$, i.e., $\ker\alpha=\xi$.

The proof of Theorem \ref{thm:mec} has non-trivial overlaps with the
proof of its counterpart in \cite{HM} -- after all, the difference
between the two results lies mainly in the interpretation of
\eqref{eq:mec} -- although our argument is rather more
concise. Moreover, our proof is not entirely self contained and
depends on \cite{HM} at two essential points. One is the construction
of the local contact homology and the other is one of its
properties. This is (LC3) stated in Section \ref{sec:local}; see also
\cite[Theorem 3 and Corollary~1]{GHHM}. We further elaborate on the
relation between the two theorems and show how, once some preliminary
work is done, our Theorem \ref{rmk:chi} can be derived from
\cite[Theorem 1.5]{HM} in Remark \ref{rmk:chi}.

\subsubsection{Filtered contact homology}
Let us first assume that
the contact form $\alpha$ is non-degenerate. The complex
$\CC_*(\alpha)$ is filtered by the action 
$$
\CA_\alpha(x)=\int_x\alpha, 
$$
i.e., its subspace $\CC_*^a(\alpha)$, where $a\in\R$, generated by the
orbits $x$ with $\CA_\alpha(x)\leq a$ is a subcomplex. We
refer to the homology $\HC_*^a(\alpha)$ of this complex as the
\emph{filtered contact homology}; see, e.g., \cite{GHHM,HM}. Note that,
in contrast with $\HC_*(\xi)$, these spaces depend on
$\alpha$. Whenever $a\leq b$, we have a natural map
$$
\HC_*^a(\alpha)\to \HC_*^b(\alpha)
$$
induced by the inclusions of the complexes and,
since homology commutes with direct limits,
\begin{equation}
\label{eq:limit}
\varinjlim_{a\to\infty} \HC_*^a(\alpha)=\HC_*(\xi).
\end{equation}

This definition extends to degenerate forms by continuity. Below we
will discuss this definition in detail; for the construction is
essential for what follows and it involves certain non-obvious (to us)
nuances. Let us start, however, by recalling some standard facts and
definitions. 

The action spectrum $\CS(\alpha)$ of $\alpha$ is the
collection of action values $A_\alpha(x)$, where $x$ ranges through the
set of closed Reeb orbits of $\alpha$ (in the class $\Gamma$). This is
a closed zero-measure subset of $\R$.

For any two contact forms $\alpha$ and $\alpha'$, giving rise to the
same contact structure $\xi$, write $\alpha'> \alpha$ when
$\alpha'/\alpha > 1$, i.e., $\alpha'=f\alpha$ with $f> 1$. This is
clearly a partial order, and, when both forms are non-degenerate, we
have a homomorphism $\HC_*^a(\alpha')\to \HC_*^a(\alpha)$ induced
by the natural cobordism in the symplectization of $\xi$ between
$\alpha$ and $\alpha'$.
 
Furthermore, we will need the following invariance property of
filtered contact homology, stated in a slightly different form in
\cite[Proposition 5]{GHHM}: \emph{Let $\alpha_s$, $s\in [0,\, 1]$, be
  a family of contact forms such that $\alpha_0$ and $\alpha_1$ are
  non-degenerate and $a\not\in\CS(\alpha_s)$ for all $s$. Then the
  contact homology spaces $\HC^a_*(\alpha_0)$ and $\HC^a_*(\alpha_1)$
  are isomorphic and the isomorphism is independent of the family
  $\alpha_s$ as long as $a\not\in \CS(\alpha_s)$. Furthermore, assume
  that the family $\alpha_s$ is decreasing, i.e.,
  $\alpha_{s'}>\alpha_s$ when $s'<s$. Then the natural map
  $\HC^a_*(\alpha_0)\to \HC^a_*(\alpha_1)$ is an isomorphism.} This
proposition can be proved in exactly the same way as its Floer
homological counterparts; see, e.g., \cite{BPS, Gi:coiso,
  Vi:functors}.

Let now $\alpha$ be a possibly degenerate contact form and
$a\not\in\CS(\alpha)$. Set 
$$
\HC_*^a(\alpha):=\HC_*^a(\tal),
$$
where $\tal$ is a non-degenerate $C^\infty$-small perturbation of
$\alpha$, i.e., $\tal$ is non-degenerate and $C^\infty$-close to
$\alpha$. By the invariance property of the filtered contact homology, for
any two non-degenerate perturbations sufficiently close to $\alpha$
the homology groups on the right hand side are canonically isomorphic,
and hence the left hand side is well defined. Alternatively, we could
have set
$$
\HC_*^a(\alpha):=\varinjlim_{\tal>\alpha}\HC_*^a(\tal),
$$
where the limit is taken over all non-degenerate forms
$\tal>\alpha$. These two definitions are obviously equivalent. In
contrast with $\HC_*(\xi)$, the graded vector spaces $\HC_*^a(\alpha)$
are automatically finite dimensional:
$\sum_l\dim\HC_l^a(\alpha)<\infty$.

With this definition, we have a well-defined map
$\HC_*^a(\alpha')\to \HC_*^a(\alpha)$ for any two, not necessarily
non-degenerate, forms $\alpha'>\alpha$, and hence the non-degeneracy requirement
on $\alpha_0$ and $\alpha_1$ in the invariance property can be
dropped.  Now, however, once $\alpha$ is not assumed to be
non-degenerate, \eqref{eq:limit} requires a proof. Implicitly, this
result is already contained in \cite{HM}. 

\begin{Lemma}
\label{lemma:limit}
For any, not necessarily non-degenerate form $\alpha$,
\eqref{eq:limit} holds as $a\to\infty$ through $\R\setminus \CS(\alpha)$.
\end{Lemma}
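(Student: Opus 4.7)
The plan is to reduce to the already-known non-degenerate case via the definition $\HC_*^a(\alpha):=\HC_*^a(\tal)$ for a small non-degenerate perturbation, combined with the invariance property stated just before the lemma.

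First I would cofinalize the limit. The spectrum $\CS(\alpha)\subset\R$ is closed and of measure zero, so there is an increasing sequence $a_1<a_2<\cdots\to\infty$ with $a_k\notin\CS(\alpha)$, and it suffices to establish \eqref{eq:limit} along this sequence. Next, I would inductively construct a decreasing family of non-degenerate forms $\tal_1>\tal_2>\cdots>\alpha$ converging to $\alpha$ in $C^\infty$, chosen so that for each $k$ the form $\tal_k$ is close enough to $\alpha$ that (i) $a_j\notin\CS(\tal_k)$ for all $j\le k$, (ii) the identification $\HC_*^{a_j}(\alpha)=\HC_*^{a_j}(\tal_k)$ from the definition of filtered contact homology holds for all $j\le k$, and (iii) the path $s\mapsto\tal_k+s(\tal_{k-1}-\tal_k)$ from $\tal_k$ to $\tal_{k-1}$ is a decreasing family of contact forms with $a_j$ avoiding the spectrum throughout for all $j\le k$. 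All three properties are open conditions depending on finitely many $a_j$, so such a choice is possible.

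Then I would apply the invariance property to the decreasing families produced in (iii): for any $j\le k\le k'$, the natural map
\[
\HC_*^{a_j}(\tal_k)\longrightarrow \HC_*^{a_j}(\tal_{k'})
\]
is an isomorphism, and under the identification from (ii) it agrees with the identity on $\HC_*^{a_j}(\alpha)$. By functoriality, the action-inclusion maps $\HC_*^{a_j}(\alpha)\to \HC_*^{a_{j'}}(\alpha)$ for $j\le j'$ correspond, via the identification with $\tal_{k}$ (any $k\ge j'$), to the filtered-inclusion maps $\HC_*^{a_j}(\tal_k)\to\HC_*^{a_{j'}}(\tal_k)$.

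Finally I would pass to the direct limit. For each fixed $k$ the non-degenerate case gives $\varinjlim_j\HC_*^{a_j}(\tal_k)=\HC_*(\tal_k)=\HC_*(\xi)$. The diagonal system $\{\HC_*^{a_k}(\tal_k)\}_k$ maps cofinally into each column $\{\HC_*^{a_j}(\tal_k)\}_j$ via the isomorphisms above, so its direct limit also equals $\HC_*(\xi)$; transporting back through (ii) identifies this with $\varinjlim_k\HC_*^{a_k}(\alpha)$.

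The main obstacle is the bookkeeping in the middle step: one must verify that the canonical identification $\HC_*^{a}(\alpha)\cong\HC_*^{a}(\tal)$ commutes with the filtered-inclusion maps and with the invariance isomorphisms produced by one-parameter families. This is where the clause in the invariance property that the isomorphism is \emph{independent of the family} $\alpha_s$ (as long as $a\notin\CS(\alpha_s)$) is essential, since it allows one to compare the identification defined via very small perturbations with the map induced by the larger perturbation $\tal_k$. Once this compatibility is in place, the limit interchange in the last paragraph is automatic.
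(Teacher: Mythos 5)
Your argument is correct and shares the skeleton of the paper's proof---a decreasing sequence of non-degenerate forms converging to $\alpha$ from above, the resulting double direct system $\HC_*^{a_i}(\alpha_j)$, and a diagonal limit---but the mechanism for interchanging the two limits is genuinely different. You upgrade the cobordism maps $\HC_*^{a_j}(\tal_k)\to\HC_*^{a_j}(\tal_{k'})$ to isomorphisms at each finite action level by invoking the invariance property along carefully chosen decreasing segments; once every such map is an isomorphism commuting with the filtration inclusions, the diagonal colimit formally agrees with the colimit of a single column, which is $\HC_*(\xi)$ by the non-degenerate case. The paper never asserts these finite-level isomorphisms: it uses only that the maps $\HC_*^{a_i}(\alpha_j)\to\HC_*^{a_i}(\alpha_{j+1})$ induce the identity on $\HC_*(\xi)$ after passing to the limit in $i$, and then constructs explicit maps $\Phi$ and $\Psi$ between $\HC_*(\xi)$ and the diagonal limit. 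Your route buys a cleaner final step at the cost of a more delicate inductive choice of the $\tal_k$, and there is one off-by-one slip there: condition (iii) with $j\le k$ demands $a_k\notin\CS(\tal_{k-1})$ along the segment up to and including $\tal_{k-1}$, which cannot be arranged retroactively since $\tal_{k-1}$ was fixed at the previous stage without reference to $a_k$. Requiring (iii) only for $j\le k-1$ (equivalently, choosing $\tal_k$ inside a $C^1$-ball around $\alpha$ on which the spectrum avoids $a_1,\dots,a_{k-1}$, so that every later segment stays in that ball) still yields isomorphisms $\HC_*^{a_j}(\tal_k)\to\HC_*^{a_j}(\tal_{k'})$ for all $j<k\le k'$, which is all the cofinality argument needs.
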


\begin{proof}
Fix a sequence $a_i\to\infty$ with $a_i\not\in \CS(\alpha)$ and
consider a decreasing sequence of non-degenerate forms $\alpha_j$ $C^\infty$-converging to
$\alpha$ from above, i.e., we have
$$
\alpha_1>
\alpha_2>\cdots >\alpha .
$$
These sequences give rise to the maps
\begin{equation}
\label{eq:maps_j}
\ldots\to\HC_*^{a_i}(\alpha_j)\to
\HC_*^{a_{i}}(\alpha_{j+1}) \to\ldots.
\end{equation}
In addition, of course, we also have the homomorphisms
\begin{equation}
\label{eq:maps_i}
\ldots\to \HC_*^{a_i}(\alpha_j)\to
\HC_*^{a_{i+1}}(\alpha_j)\to\ldots 
\end{equation}
commuting with the maps \eqref{eq:maps_j}. After passing to the
limit as $i\to\infty$, the maps \eqref{eq:maps_j} induce the identity
map on $\HC_*(\xi)$.

Set 
$$
L=\varinjlim_{a_i\to\infty} \HC_*^{a_i}(\alpha_i)
$$
with respect to the ``diagonal'' maps
$$
\delta_i\colon \HC_*^{a_i}(\alpha_i)\to
\HC_*^{a_{i+1}}(\alpha_{i+1}).
$$
To prove the lemma, it suffices to show that
$L\cong\HC_*(\xi)$.

Let $u\in \HC_*(\xi)$. By \eqref{eq:limit}, there exists $i(u)$ such
that $u$ is the image of
$u_{i(u)1}\in\HC_*^{a_{i(u)}}(\alpha_1)$. Applying the maps
\eqref{eq:maps_j} and \eqref{eq:maps_i} to $u_{i(u)1}$, we obtain a
double sequence $u_{ij}\in \HC_*^{a_i}(\alpha_j)$ where $i\geq
i(u)$. Set $v_i=u_{ii}$, $i\geq i(u)$. Clearly,
$\delta_i(v_i)=v_{i+1}$, and thus the sequence $v_i$ gives rise to an
element $v\in L$. As a result, we have constructed a homomorphism
$$
\Phi\colon \HC_*(\xi)\to L, \quad \Phi(u)=v.
$$

Conversely, consider a sequence $v_i\in\HC_*^{a_i}(\alpha_i)$
with $i\geq i(v)$ such
that $\delta_i(v_i)=v_{i+1}$. The image $u$ of $v_i$ in
$\HC_*(\xi)$ is independent of choice of $v_i$, and we have a map
$$
\Psi\colon L\to \HC_*(\xi), \quad \Psi(v)=u .
$$
Essentially by the definition, $\Psi\Phi=\id$. \end{proof}

Finally let $\HC_*^{(a,\,b)}(\alpha)$, where $a<b$, be the homology of the
quotient complex $\CC_*^b(\alpha)/\CC_*^a(\alpha)$, provided that
$\alpha$ is non-degenerate. When $a<b$ are outside $\CS(\alpha)$ this
definition again extends to all $\alpha$ by continuity. In any event,
we obviously have the exact sequence
$$
\ldots\to \HC_*^a(\alpha)\to \HC_*^b(\alpha)\to
\HC_*^{(a,\,b)}(\alpha)\to\ldots .
$$

\subsubsection{Local contact homology} 
\label{sec:local}
Let $x$ be an isolated, but not necessarily simple or non-degenerate,
closed Reeb orbit of $\alpha$. Consider a small non-degenerate
perturbation $\tal$ of $\alpha$. Under this perturbation $x$ splits
into, in general, several non-degenerate orbits $\tx_1,\ldots,\tx_r$
with nearly the same period (i.e., action) and mean index as $x$. The
vector space $\CC_*(\tal,x)$ generated over $\Q$ by the good orbits in
this collection can be equipped with a differential. The resulting
homology, denoted by $\HC_*(x)$, is independent of the perturbation
$\tal$; see \cite{HM}. We call $\HC_*(x)$ the \emph{local contact
  homology} of $x$. These complexes and the homology spaces are graded
just as the ordinary contact homology, i.e.,
$|\tx_i|=\MUCZ(\tx_i)+n-3$.

\begin{Example} Assume that $x$ is non-degenerate. Then $\HC_*(x)$ is
  equal to $\Q$ and concentrated in degree $|x|$ when $x$ is good and
  $\HC_*(x)=0$ when $x$ is bad.
\end{Example}

\begin{Example} 
  Assume that $x$ is simple. Then $\HC_*(x)$ is isomorphic up to a
  shift of degree to the local Floer homology $\HF_*(F)$ of the
  Poincar\'e return map $F$ of $x$. See \cite{GHHM,HM} for the proof
  of this fact, which can also be established by repeating
  word-for-word the proof of \cite[Proposition 4.30]{EKP}, and, e.g.,
  \cite{Gi:CC,GG:gap,McL} for a detailed discussion of local Floer
  homology.
\end{Example}

\begin{Example}
\label{ex:contact-Floer}
When $x$ is not simple, the relation between the local contact
homology and the Floer homology is more involved. Namely,
$\HC_{*+n-3}(y^\ka)=\HF_*^{\Z_\ka}(F^\ka)$, where $F$ is the
Poincar\'e return map of a simple orbit $y$ and the right hand side
stands for the $\Z_\ka$-equivariant local Floer homology;
\cite[Theorem~3]{GHHM}. The proof of this theorem depends on the
machinery of multivalued perturbations and is only outlined in
\cite{GHHM}. Although we find this relation illuminating, the present
paper does not rely on this result.
\end{Example}

The proof of Theorem \ref{thm:mec} makes use of several properties of
local contact homology, which we now recall following \cite{GHHM,HM}:

\begin{itemize}
\item[(LC1)] $\HC_*(x)$ is finite dimensional and supported in the
  range of degrees $[\Delta(x)-2,\Delta(x)+2n-4]$, i.e., $\HC_*(x)$ vanishes when
  $*$ is outside this range.

\item[(LC2)] Assume that $x=y^\ka$, where $y$ is a simple closed Reeb
  orbit, and let $F$ be the Poincar\'e return map of $y$. Then, in the
  notation of Section \ref{sec:index-germ},
$$
\sum (-1)^l\dim \HC_l(x)=I_\ka(F,y).
$$
\item[(LC3)] For all iterations $\ka$ of a simple orbit $x$, the (total)
  dimension of the graded vector space $\HC_*(x^\ka)$ is bounded by
  a constant independent of $\ka$, provided that $x^\ka$
  remains isolated.

\item[(LC4)] Assume that $c$ is the only point of $\CS(\alpha)$ in the
  interval $[a,\, b]$ and that all closed Reeb orbits with action $c$
  are isolated. (Hence, there are only finitely many such orbits.) Then
$$
\HC_*^{(a,\,b)}(\alpha)=\bigoplus_{\CA_{\alpha}(x)=c}\HC_*(x).
$$
\end{itemize}

Here only (LC3) is not straightforward to prove. The assertion (LC1)
follows from the facts that
$$
|\MUCZ(x)-\Delta(x)|\leq n-1
$$ 
for any closed Reeb orbit on a $(2n-1)$-dimensional contact manifold
and that $\Delta(x)$ depends continuously on $x$; see, e.g.,
\cite{Lo,SZ}. The property (LC2) is an immediate consequence of the
definition, and (LC4) follows from the fact that a holomorphic curve
in the symplectization with zero $\omega$-energy must be the cylinder
over a Reeb orbit; see, e.g., \cite[Lemma 5.4]{BEHWZ}. Finally, (LC3)
is a far reaching generalization a theorem of Gromoll and Meyer,
\cite{GM}. This result is established in \cite[Section 6]{HM} as a
consequence of a theorem from \cite{GG:gap}, asserting a similar upper
bound for local Floer homology. (Note that (LC3) also follows from
\cite[Theorem 3 and Corollary 1]{GHHM} stated in Example
\ref{ex:contact-Floer} above.  An analogue of (LC3) for
(non-equivariant) symplectic homology is proved in \cite{McL}.)

\subsubsection{Proof of Theorem \ref{thm:mec}} In the proof, we will
focus on the case of $\chi^+$; for its negative counterpart, $\chi^-$,
can be handled in a similar fashion. Thus let $x_1,\ldots,x_r$ be the
orbits of $\alpha$ with positive mean index: $\Delta(x_i)>0$. Set
$l_+=2n-3$. By (LC1), this is the lowest degree for which the orbits
with $\Delta=0$ cannot contribute to the homology.

Our first goal is to prove 
\eqref{eq:bound} and (CH). To this end, observe that we have the following
version of the Morse inequalities:
\begin{equation}
\label{eq:morse}
\dim\HC_l(\alpha)\leq \sum\dim \HC_l(x),
\end{equation}
where, when $l\geq l_+$, the sum is taken over all orbits $x=x_i^\ka$
with $\Delta(x)>0$. This is a consequence of (LC4) and the long exact
sequence for filtered contact homology. Since the mean index is
homogeneous, i.e., $\Delta(x)=\ka\Delta(x_i)$, we see that when $\ka$
is large the orbits $x_i^\ka$ do not contribute to $\HC_l(\alpha)$ or,
to be more precise, to the right hand side of \eqref{eq:morse}, due to
(LC1) again. (It suffices to take $\ka> (l+2)/\Delta(x_i)$ here.) In
particular, the right hand side of \eqref{eq:morse} is finite when
$l\geq l_+$.  Furthermore, by (LC3), the contribution of $x_i^\ka$ to
$\HC_l(\alpha)$ is bounded from above and the bound is independent of
$l$. This proves \eqref{eq:bound}: $\dim\HC_l(\alpha)\leq \const$,
where the constant is independent of $l\geq l_+$. 

Let us now prove \eqref{eq:mec} and, particular, the fact that
$\chi^+(\xi)$ is defined. Set
$$
\chi(x)=\sum_l(-1)^l\dim\HC_l(x).
$$
Thus, in the notation of (LC2), $\chi(x) =I_\ka(F)$ where $x=y^\ka$
and $F$ is the Poincar\'e return map of $y$.  Next note that for any
$a\not\in\CA(\alpha)$, we have
\begin{equation}
\label{eq:euler-ch}
\sum_l(-1)^l\dim\HC_l^a(\alpha)= \sum_{\CA_\alpha(x)<a} \chi(x),
\end{equation}
where the sum is now over all closed Reeb orbits with action less than
$a$, but not as in \eqref{eq:morse} over only the iterations of the
orbits $x_i$.  This is again an immediate consequence of (LC2) and
(LC4) and the long exact sequence for filtered contact homology.

When the summation is restricted to the degrees
$l_+\leq l\leq N$,  a variant of \eqref{eq:euler-ch} still holds up to a
bounded error: 
\begin{equation}
\label{eq:euler2}
\Big|\sum_{l=
  l_+}^{N}(-1)^l\dim\HC_l^a(\alpha)-\sum
\chi(x_i^\ka)\Big|
\leq \const,
\end{equation}
where $\const$ on the right is independent of $a$ and $N$ and the
second sum is taken now over the orbits $x_i^\ka$ with
$\CA_\alpha(x_i^\ka)<a$ and $\Delta(x_i^\ka)\leq N$.  Just as
\eqref{eq:morse} and \eqref{eq:euler-ch}, this readily follows from
(LC2) and (LC4) and the long exact sequence for filtered contact
homology.

Let us divide \eqref{eq:euler2} by $N$ and let $a\to\infty$ and then
$N\to\infty$. By Lemma \ref{lemma:limit}, the first sum will then converge
to $\chi^+(\xi)$, and hence
$$
\chi^+(\xi)=\sum_i
\lim_{N\to\infty}\frac{1}{N}\sum_{\Delta(x_i^\ka)\leq N}\chi(x_i^\ka),
$$
provided that the limit on the right hand side exists.
Since $\Delta(x_i^\ka)=\ka\Delta(x_i)$, we also have
$$
\sum_{\Delta(x_i^\ka)\leq
  N}\chi(x_i^\ka)=\frac{1}{\Delta(x_i)}\sum_{\ka=1}^N\chi(x_i^\ka)
+O(1),
$$
as $N\to\infty$. As a result, by the definition of $\sigma(x_i)$ and (LC2),
$$
\lim_{N\to\infty}\frac{1}{N}\sum_{\Delta(x_i^\ka)\leq N}\chi(x_i^\ka)=
\frac{\sigma(x_i)}{\Delta(x_i)},
$$
which concludes the proof of the theorem. $\hfill\square$ 

\begin{Remark}
 To guarantee the existence of the positive/negative MEC
  and prove \eqref{eq:bound} for positive/negative range, it suffices
  to assume only that the collection of simple Reeb orbits with
  positive/negative mean index is finite.
\end{Remark}

\begin{Remark}
\label{rmk:chi}
By (LC2) and Theorem \ref{thm:subordinate}, the function $\ka\mapsto
\chi(x^\ka)$ is subordinated to the collection $\CN(F)$ associated
with the Poincar\'e return map $F$ of $x$ as in Section
\ref{sec:index-germ}. Furthermore, recall that the local MEC of $x$ is
defined in \cite{HM} as
$$
\hat{\chi}(x)=\lim_{N\to\infty}\frac{1}{N}\sum_{\ka=1}^N\chi(x^\ka).
$$
By (LC2), we have $\hat{\chi}(x)=\sigma(x)$. (In particular, the limit
in the definition of $\hat{\chi}(x)$ exists.) With these observations
in mind, our Theorem \ref{thm:mec} can be easily obtained as a
consequence of \cite[Theorem 1.5]{HM}.
\end{Remark}

\subsection{Asymptotic Morse inequalities} 
\label{sec:morse}
The argument from the
previous section lends itself readily to a proof of the asymptotic
Morse inequalities. Namely, in the setting of Section
\ref{sec:setting}, assume that $\xi$ satisfies (HC) and set
$$
\beta^\pm(\xi)=\limsup_{N\to\infty}\frac{1}{N}\sum_{l=l_\pm}^N\dim\HC_{\pm l}(\xi).
$$
Likewise, when $x$ is a simple closed Reeb orbit such that all
iterations $x^\ka$ are isolated, set
$$
\beta (x) =\limsup_{N\to\infty}\frac{1}{N}\sum_{\ka=1}^N\dim\HC_*(x^\ka).
$$
This is finite number by (LC3); in fact, one can expect that in this
case the limit exists; cf.\ Remark~\ref{rmk:homology}.

\begin{Theorem}
\label{thm:morse}
Assume that the Reeb flow of $\alpha$ on $M^{2n-1}$ has only
finitely many simple periodic orbits $x_1,\ldots,x_r$, not necessarily
non-degenerate, in the collection $\Gamma$. Then $\beta^\pm(\xi)$ is
finite and moreover
$$
\beta^\pm(\xi)\leq {\sum}^\pm\frac{\beta (x_i)}{\Delta(x_i)},
$$
where ${\sum}^\pm$ stands for the sum taken over the orbits $x_i$ with
positive/negative mean index $\Delta(x_i)$.
\end{Theorem}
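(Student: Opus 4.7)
The plan is to adapt the argument of Theorem~\ref{thm:mec} to raw dimensions in place of signed counts, focusing throughout on $\beta^+$; the case $\beta^-$ is entirely analogous. The essential new input beyond what Theorem~\ref{thm:mec} uses is property (LC3), which must now play the role that (LC2) plays in the Euler characteristic version: it supplies a uniform upper bound on $\dim \HC_*(x_i^\ka)$ as $\ka$ varies, and this is what both keeps the partial sums growing at most linearly and validates the limit manipulations. A secondary difference is that the exact identities of the Euler-characteristic argument become inequalities, since dimensions cannot cancel.

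The first step is to establish the Morse-type estimate
\[
\dim \HC_l(\xi) \;\leq\; \sum_{i,\, \ka} \dim \HC_l(x_i^\ka)
\qquad \text{for every } l \geq l_+,
\]
with the sum restricted to pairs $(i, \ka)$ satisfying $\Delta(x_i) > 0$ and $l \in [\Delta(x_i^\ka) - 2,\, \Delta(x_i^\ka) + 2n - 4]$. This follows by combining (LC4), which identifies $\HC_*^{(a,\, b)}(\alpha)$ with a direct sum of local contact homologies, with the long exact sequence of filtered contact homology, and passing to the limit $a \to \infty$ via Lemma~\ref{lemma:limit}. Exactly as in the proof of Theorem~\ref{thm:mec}, the iterates of orbits with $\Delta(x_i) \leq 0$ do not contribute in degrees $l \geq l_+ = 2n - 3$, by (LC1).

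Next, sum the inequality above over $l = l_+, \ldots, N$ and interchange the order of summation. By (LC1) the support of $\HC_*(x_i^\ka)$ has width $2n - 2$; when it lies entirely inside $[l_+, N]$, i.e., roughly when $\ka \leq (N - 2n + 4)/\Delta(x_i)$, the inner sum over $l$ collapses to $\dim \HC_*(x_i^\ka)$. For each $i$ there are only $O(1)$ boundary iterates whose support straddles an endpoint, and each of them contributes $O(1)$ to the total by (LC3). Consequently
\[
\sum_{l = l_+}^N \dim \HC_l(\xi) \;\leq\; \sum_{i\,:\, \Delta(x_i) > 0} \sum_{\ka = 1}^{\lfloor N / \Delta(x_i) \rfloor} \dim \HC_*(x_i^\ka) \;+\; O(1).
\]
Dividing by $N$, taking $\limsup$, and rescaling the index $\ka$ via the homogeneity $\Delta(x_i^\ka) = \ka \Delta(x_i)$ produce
\[
\beta^+(\xi) \;\leq\; {\sum}^+ \frac{\beta(x_i)}{\Delta(x_i)},
\]
and finiteness of $\beta^+(\xi)$ is automatic, since each $\beta(x_i)$ is finite by (LC3).

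The main obstacle is the careful treatment of the boundary iterates — those whose index support only partially overlaps $[l_+, N]$. Here (LC3) is indispensable: it converts their contribution into an $O(1)$ error that disappears after division by $N$. A secondary subtlety is the limit interchange in the final rescaling step; the uniform bound on $\frac{1}{M}\sum_{\ka=1}^{M} \dim \HC_*(x_i^\ka)$ provided by (LC3) is what absorbs the $1 + O(1/M)$ distortion arising from comparing $\lfloor N / \Delta(x_i) \rfloor$ with $N / \Delta(x_i)$.
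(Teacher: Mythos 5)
Your proposal is correct and follows exactly the route the paper intends: the authors simply remark that Theorem \ref{thm:morse} ``can be proved exactly in the same way as Theorem \ref{thm:mec}'', and your write-up is precisely that adaptation — the Morse inequality \eqref{eq:morse} from (LC1), (LC4) and the long exact sequence, with (LC3) controlling the total dimensions and the boundary terms, and the identities of the Euler-characteristic argument relaxed to inequalities. No gaps.
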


This theorem can be proved exactly in the same way as Theorem
\ref{thm:mec}. The non-degenerate case of this result is pointed out
in \cite[Remark 1.10]{GK}. Note that, as an immediate consequence of
Theorem \ref{thm:morse}, we obtain a theorem from \cite{HM} (cf.\
\cite{GM,McL}) asserting that $\alpha$ must have infinitely many closed
Reeb orbits when $\dim\HC_l(\xi)$ is unbounded as a function of $\pm
l\geq l_\pm$.

\begin{Remark}
\label{rmk:homology}
  At this stage, very little is known about the sequence
  $\dim\HC_*(x^\ka)$, except that this sequence is bounded,
  or about the ``mean Betti number'' $\beta(x)$. For instance, recall
  that the sequence $\chi(x^\ka)=I_\ka(F)$ is periodic by Theorem
  \ref{thm:subordinate} and (LC2), and hence $\sigma(x)$ is
  rational. Moreover, a similar result holds for the local Floer homology. In
  fact, as is easy to see from the proof of \cite[Theorem
  1.1]{GG:gap}, the sequence $\dim\HF_*(x^\ka)$, where $x$ is an
  isolated periodic orbit of a Hamiltonian diffeomorphism, is still
  subordinated to $\CN$.  We conjecture that this is also true for
  $\dim\HC_*(x^\ka)$, and hence $\beta(x)\in\Q$. However, neither of
  these facts have been proved yet.
\end{Remark}

\begin{Remark}[Variations]
\label{rmk:var}
Variants of Theorems \ref{thm:mec} and \ref{thm:morse} hold for
cylindrical contact homology, when the latter is defined, with
straightforward modifications to the setting and virtually the same
proofs; cf.\ Remark \ref{rmk:cyl}.

One can also expect the analogues of these theorems to hold for, say,
the (positive) equivariant symplectic homology or for the
$S^1$-equivariant Morse homology of the energy functional on the space
of loops on Riemannian or Finsler manifold, or for the equivariant
Morse-type homology associated with certain other functionals on loop
spaces; see \cite{CDvK,FSVK,LLW,Ra,WHL} for some relevant results. However,
now the situation is more delicate because the corresponding complexes
are not directly, by definition, generated by the (good) orbits and,
strictly speaking, even in the non-degenerate case an extra step in
the proof  is needed to relate the homology and the orbits. For
instance, in the non-degenerate analogue of Theorem \ref{thm:mec} for
either of these ``homology theories'', the contribution of an orbit
$x^\ka$ is equal to the Euler characteristic of the infinite lens
space $S^{2\infty-1}/\Z_\ka$ with respect to a certain local
coefficient system; cf.\ \cite[Lecture 3]{Bott}. This contribution is
trivial when $x^\ka$ is bad and equal to $\pm 1$ when the orbit is
good. With this in mind, the proof of Theorem \ref{thm:mec} should go
through with mainly notational changes except for an analogue of (LC3),
which now requires a proof; cf.\ \cite{HM} and \cite{McL}.
\end{Remark}

\section{Closed Reeb orbits on $S^3$}
\label{sec:S^3}

Our goal in this section is to reprove the following 

\begin{Theorem}[\cite{CGH,GHHM,LL}]
\label{thm:S^3}
Let $\alpha$ be a contact form on $S^3$ such that $\ker\alpha$ is the
standard contact structure. Then the Reeb flow of $\alpha$ has at
least two closed orbits.
\end{Theorem}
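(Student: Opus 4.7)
The plan is to argue by contradiction. Since $\chi^+(\xi_0)=1/2\ne 0$ by Example \ref{eq:xi_0}, the Reeb flow has at least one simple closed orbit, and I assume for contradiction that it has exactly one, call it $x$. Take the standard $4$-ball as filling $(W,\omega)$: it is exact, $c_1(TW)=0$, and since $S^3$ is simply connected the only relevant class collection is $\Gamma=\{0\}$. The hypotheses of Theorem \ref{thm:mec} are therefore in force.

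The first step is to show $\Delta(x)>0$ and $\sigma(x)=\Delta(x)/2$. By Theorem \ref{thm:mec}, $\chi^+(\xi_0)$ equals $\sigma(x)/\Delta(x)$ when $\Delta(x)>0$, vanishes when $\Delta(x)<0$ (the orbit then contributes only to $\chi^-$), and vanishes when $\Delta(x)=0$ (then $\HC_*(x^\ka)$ is supported in the fixed bounded degree range $[-2,2n-4]$ by (LC1), contributing asymptotically nothing). Only the first alternative is consistent with $\chi^+(\xi_0)=1/2$.

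The main step is then to identify the dynamical type of $x$. The Poincar\'e return map $F$ of $x$ is an area-preserving germ on $(\R^2,0)$ all of whose iterates have $0$ as an isolated fixed point. Example \ref{ex:2D} in the area-preserving case gives $I(F^\ka)\le 1$, and combining this with the bad-orbit bookkeeping of Definition \ref{def:main} and Theorem \ref{thm:index-germ} yields $\sigma(x)\le 1$, hence $\Delta(x)\le 2$. Each non-degenerate possibility from Example \ref{ex:2D} for $F$ is then excluded by pairing $\sigma=\Delta/2$ with the homological profile $\HC_l(\xi_0)=\Q$ for every even $l\ge 2$ and $=0$ otherwise, via (LC1)--(LC2): positive hyperbolic gives $\sigma(x)=-1$; negative hyperbolic forces $\Delta(x)=1$ and produces a good generator at $|x|=0$, violating $\HC_0(\xi_0)=0$; any non-degenerate elliptic with $\Delta(x)<2$ similarly places a good generator at $|x|=0$; and $\Delta(x)=2$ is incompatible with $F$ having no eigenvalue equal to a root of unity. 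Hence $F$ is degenerate with $\Delta(x)=2$ and $\sigma(x)=1$; the equality $\sigma(x)=1$ together with (LC1)--(LC2) pins each $\HC_*(x^\ka)$ to be one-dimensional in a single even degree, which is the definition of a symplectically degenerate minimum (SDM) in the sense of \cite{GHHM}.

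Finally I would invoke the contact analogue of the degenerate Conley conjecture established in \cite{GHHM}: the presence of even a single SDM closed Reeb orbit in our setting forces the Reeb flow to carry infinitely many closed orbits, contradicting the standing hypothesis. The main obstacle is the classification in the third paragraph; while each individual subcase is elementary once the $2$-dimensional index constraints of Example \ref{ex:2D} and the local-to-global grading via (LC1)--(LC2) are in hand, stitching them together and correctly matching with the specific graded structure of $\HC_*(\xi_0)$ requires care.
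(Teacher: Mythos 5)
Your strategy is essentially the paper's: use Theorem \ref{thm:mec} and Example \ref{eq:xi_0} to get $\sigma(x)/\Delta(x)=1/2$ with $\Delta(x)>0$, classify the possible Poincar\'e return maps via Example \ref{ex:2D}, conclude that $x$ must be an SDM, and invoke Theorem \ref{thm:sdm}. The observation that the area-preserving bound $I(F^\ka)\le 1$ forces $\sigma(x)\le 1$ and hence $\Delta(x)\le 2$ is a clean shortcut to the paper's conclusion that $m\in\{0,1\}$.

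There is, however, a genuine gap at precisely the hardest point. The case you must still exclude is $F$ elliptic with eigenvalues $e^{\pm 2\pi i p/q}$, $q\ge 2$, and $0<\Delta(x)=2p/q<2$: here $x$ is a non-degenerate fixed point of $F$ but $x^q$ is degenerate, so it is covered neither by ``totally non-degenerate elliptic'' nor by your degenerate endgame. Your dismissal --- that such an orbit ``places a good generator at $|x|=0$, violating $\HC_0(\xi_0)=0$'' --- does not work as stated: a generator of the chain complex in degree $0$ need not survive to homology, and in this case there genuinely is something available to kill it. The constraint $\sigma(x)=\Delta(x)/2$ gives $q=p+r$ with $r\ge 1$, and the degenerate iterates $x^{jq}$ carry local contact homology of dimension $r-1$ concentrated in \emph{odd} degree (by \eqref{eq:dim}), sitting in degree $1$ for $j=1$, exactly adjacent to the degree-$0$ generators. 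The paper closes this case in two steps: first a degree-jump argument showing $p=1$ (for $p\ge 2$ nothing at all can reach degree $0$ from $x^q$ onward, so $\HC_0\ne 0$), and then a count showing that cancelling the $q-1$ degree-$0$ generators would require $q-1\le r-1$, contradicting $r=q-p=q-1$. This counting step is the substitute for the finite-energy-foliation input of \cite{GHHM} and cannot be skipped. Two smaller corrections: in this paper SDM abbreviates symplectically degenerate \emph{maximum}, not minimum; and ``one-dimensional in a single even degree'' is not yet the SDM condition --- you must place $\HC_*(x)$ at the top of its range $[0,2]$, i.e.\ in degree $2$, which follows because only $x$ itself could contribute to $\HC_0(\xi_0)=0$.
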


In fact, the result proved in \cite{CGH} is much more general and
holds for all closed contact 3-manifolds; the proof uses the machinery
of embedded contact homology. The proofs in \cite{GHHM,LL} both rely
on a theorem establishing the ``degenerate case of the Conley
conjecture'' for contact forms and more specifically asserting that
the presence of one closed Reeb orbit of a particular type (the
so-called SDM) implies the existence of infinitely many closed Reeb
orbits.  This result holds in all dimensions and is stated explicitly
below. Another non-trivial, and in this case strictly
three-dimensional, counterpart of the argument in \cite{GHHM} comes
from the theory of finite energy foliations (see \cite{HWZ1,HWZ2}),
while the argument in \cite{LL} uses, also in a non-trivial way, a
variant of the resonance relation formula from \cite{LLW}. Here, we
give a very simple proof of Theorem \ref{thm:S^3} establishing it as a
consequence of our Theorem \ref{thm:mec} and the ``Conley conjecture''
type result mentioned above.

To state this result, we need to recall some definitions. Let $F_t$,
$t\in [0,\,1]$, be a family of germs of Hamiltonian diffeomorphisms of
$\R^{2n}$ fixing the origin $0\in\R^{2n}$ and generated by the germs
of Hamiltonians $H_t$ depending on $t\in S^1$. Set $F=F_1$ and
let us assume that the origin is an isolated fixed point of $F$.

Denote by $\Delta(F)\in\R$ and $\HF_*(F)$ the mean index and,
respectively, the local Floer homology of $F$ over $\Q$. Again, we refer the
reader to \cite{Gi:CC,GG:gap,McL} for the definition of the latter.
Here we only note that both of these invariants depend actually on the
homotopy type (with fixed end points) of the entire family
$F_t$. However, changing this family to a non-homotopic one, results
in a shift of $\Delta(F)$ and of the grading of $\HF_*(F)$ by the same
even number. Furthermore, $\HF_*(F)$ is supported in the range
$[\Delta(F)-n,\Delta(F)+n]$, i.e., $\HF_*(F)$ vanishes for degrees
outside this range. When $2n=2$, $\HF_*(F)$ is supported in at most
only one degree; \cite{GG:gap}.  Finally, analogously to (LC2) and
essentially by definition, we have
$$
I(F)=(-1)^n\sum (-1)^l\dim\HF_l(F).
$$

The fixed point of $F$, the origin, is said to be a
\emph{symplectically degenerate maximum} (SDM) of $F$ if $\Delta(x)\in
\Z$ and $\HF_*(F)$ does not vanish at the upper boundary of its
possible range, i.e., $\HF_{\Delta(x)+n}(F)\neq 0$. This is clearly a
feature of the time-one map $F$, independent of the homotopy type of
the family $F_t$. Note also that in this case $F$ is necessarily
totally degenerate, i.e., all eigenvalues of $DF$ are equal to 1,
although $DF$ need not be the identity. (Clearly, $\Delta(F)\in\Z$
when $F$ is totally degenerate; the converse, however, is not true
without homological requirements.) Furthermore, then
$\HF_{\Delta(x)+n}(F)=\Q$ and $\HF_*(F)=0$ in all other degrees.  (See
\cite{Gi:CC,GG:gap} for this and other results and a detailed
discussion of SDM's.)

\begin{Example}
  Assume that $H$ on $\R^{2n}$ is autonomous, has an isolated critical
  point at the origin and this critical point is a local maximum, and
  that the Hessian $d^2 H$ at the origin is identically zero. (More
  generally, it suffices to assume that the eigenvalues of $d^2 H$ with
  respect to the symplectic form are all zero.) Then the origin is an
  SDM of the time-one map $F$ generated by $H$.
\end{Example}

Returning to Reeb flows, we say that an isolated closed Reeb orbit $x$
is an SDM if the corresponding fixed point of the
Poincar\'e return map $F$ of $x$ is an SDM. (This is equivalent to requiring
that $\Delta(x)\in\Z$ and $\HC_l(x)$ is $\Q$ when $l=\Delta(x)+2n-4$
and zero otherwise; see \cite{GHHM} for this and other related
results.) Note that, although the grading of $\HF_*(x)$ depends on some
extra choices, e.g., a trivialization of $\xi$ along $x$, the notion
of SDM is independent of these choices. 

In the setting of Section \ref{sec:setting}, we have

\begin{Theorem}[\cite{GHHM}]
\label{thm:sdm}
Let $(M^{2n-1},\ker\alpha)$ be a contact manifold admitting a strong
exact symplectic filling $(W,\omega)$ such that $c_1(TW)=0$. Assume
that the Reeb flow of $\alpha$ has an isolated simple closed Reeb
orbit $x$ which is an SDM. Then the Reeb flow of $\alpha$ has
infinitely many periodic orbits.
\end{Theorem}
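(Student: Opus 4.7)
The plan is to argue by contradiction: assume the Reeb flow of $\alpha$ has only finitely many simple closed orbits $x=x_1,x_2,\ldots,x_r$. The argument parallels the proof of the degenerate case of the Conley conjecture for Hamiltonian diffeomorphisms \cite{Gi:CC,GG:gap}, with contact homology replacing Hamiltonian Floer homology. The core algebraic input is the \emph{persistence of the SDM property under iteration}: for every $\ka\geq 1$,
$$
\HC_{\ka\Delta(x)+2n-4}(x^\ka)=\Q,
$$
and $\HC_*(x^\ka)=0$ in all other degrees. This is the contact analogue of the persistence theorem for local Floer homology of an SDM of a Hamiltonian diffeomorphism proved in \cite{GG:gap}, transferred to the contact setting via the identification of local contact homology with $\Z_\ka$-equivariant local Floer homology (Example \ref{ex:contact-Floer}).

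Granted persistence, for each $\ka$ I would fix a small non-degenerate perturbation $\tal$ of $\alpha$ and produce a distinguished ``summit'' carrier cycle $\eta_\ka$ of degree $\ka\Delta(x)+2n-4$ supported on the cluster of orbits near $x^\ka$, with action just above $\ka\CA_\alpha(x)$. Then I would track $\eta_\ka$ in the filtered contact homology $\HC^a_*(\alpha)$ as $a$ grows. By Lemma \ref{lemma:limit}, one of two things happens: either $\eta_\ka$ determines a nonzero class in $\HC_*(\xi)$, or there is a first level $a_\ka$ at which $\eta_\ka$ becomes a boundary, in which case (LC4) and the finiteness assumption force the presence of a killer orbit $z_\ka$ of action $\leq a_\ka$, Conley--Zehnder index $\ka\Delta(x)+n$, and of the form $z_\ka=x_{i(\ka)}^{m(\ka)}$ for some $i(\ka)\in\{1,\ldots,r\}$.

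The contradiction then emerges by separating cases. If $\eta_\ka$ survives for infinitely many $\ka$, one obtains nontrivial classes in $\HC_*(\xi)$ at the degrees $\ka\Delta(x)+2n-4\to\infty$, which, combined with the uniform bound $\dim\HC_l(\xi)\leq\const$ coming from \eqref{eq:morse} and (LC3), is compatible with the mean Euler characteristic formula \eqref{eq:mec} only if $\sigma(x)/\Delta(x)$ accounts for the summit class at every $\ka$; a careful comparison with the contributions of the other $x_i$ via Theorem \ref{thm:mec} produces the contradiction. In the ``killing'' case, (LC1) applied to the cluster of $z_\ka$ forces $|\Delta(z_\ka)-\ka\Delta(x)|\leq n-1$; together with $\Delta(z_\ka)=m(\ka)\Delta(x_{i(\ka)})$ and the pigeonhole principle, one extracts a subsequence along which $\Delta(x_{i(\ka)})=\Delta(x)$ and $m(\ka)/\ka\to 1$. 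Since $\CA_\alpha(z_\ka)>\ka\CA_\alpha(x)$ strictly but the actions differ by a bounded amount along this subsequence, an analysis of the Floer cylinder between $\eta_\ka$ and $z_\ka$ shows that the killer chain would itself contain a second summit carrier, contradicting the rigidity of the SDM top-degree class from the persistence statement.

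The main obstacle is the persistence statement in the first paragraph. Its Hamiltonian counterpart in \cite{GG:gap} is proved by a Gromoll--Meyer splitting of the local Floer complex combined with a Morse--Bott description of the summit degree; the transfer to local contact homology via Example \ref{ex:contact-Floer} rests on the multivalued perturbation machinery, which is the technically most demanding part of the proof. A secondary difficulty is the sharp analysis needed in the ``killing'' case to rule out subtle resonances between $\Delta(x)$ and the remaining $\Delta(x_i)$; this is where a genuine three-dimensional or low-dimensional input could simplify matters, while in arbitrary dimension one must use the full force of the summit rigidity together with the mean Euler characteristic formula.
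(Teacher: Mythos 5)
You should first note that this paper does not prove Theorem \ref{thm:sdm} at all: it is quoted from \cite{GHHM}, and the accompanying remark states that its proof is an adaptation of the degenerate case of the Conley conjecture from \cite{GG:gaps}. Measured against that argument, your proposal has a genuine gap: it is missing the quantitative, action-window mechanism that is the actual engine of the proof. In \cite{GHHM} one shows that for an SDM $x$ with period $c=\CA_\alpha(x)$ there are $\delta_\ka\to 0$ such that, for every sufficiently small $\epsilon>0$ and all large $\ka$, the filtered group $\HC^{(\ka c+\delta_\ka,\,\ka c+\epsilon)}_{\ka\Delta(x)+2n-3}(\alpha)$ is non-zero; this is proved by squeezing the form (equivalently, the germ of the return map's generating Hamiltonian) between model data whose filtered homology is computable, together with energy estimates, and it is exactly here that the SDM hypothesis enters geometrically rather than only through local homology. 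The non-vanishing then produces, for each large $\ka$, a closed orbit with action strictly inside $(\ka c,\ka c+\epsilon)$, which is what forces infinitely many orbits. Your sketch contains no substitute for this step: the persistence statement you isolate (that $x^\ka$ is again an SDM, so $\HC_{\ka\Delta(x)+2n-4}(x^\ka)=\Q$) is true and is used in \cite{GHHM}, but by itself it does not produce any orbit other than the iterates of $x$.

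Concretely, both branches of your case analysis fail. In the ``surviving'' case there is no contradiction to be had: non-vanishing of $\HC_*(\xi)$ in the summit degrees for all $\ka$ is perfectly compatible with finitely many orbits, and the relation \eqref{eq:mec} is also compatible with an SDM --- for instance a single SDM orbit on $(S^3,\xi_0)$ with $\Delta(x)=2$ and $\sigma(x)=1$ satisfies \eqref{eq:mec} exactly, and in the proof of Theorem \ref{thm:S^3} it is precisely Theorem \ref{thm:sdm} that must be invoked to exclude this configuration; so deriving Theorem \ref{thm:sdm} from Theorem \ref{thm:mec} is impossible in substance (and circular in spirit, given how the two are used together in Section \ref{sec:S^3}). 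In the ``killing'' case, nothing in your argument prevents the killer $z_\ka$ from being an iterate of $x$ itself or of one of the finitely many $x_i$ with commensurate action and index; degree bounds, (LC1) and mean-index pigeonholing cannot separate $z_\ka$ from the known orbits without the action control $\delta_\ka\to 0$ described above, and the concluding claim that ``the killer chain would itself contain a second summit carrier'' is an assertion, not an argument. To repair the proof you would have to import the squeezing/energy-estimate machinery of \cite{GG:gaps} into the contact setting, which is exactly what \cite{GHHM} does and what the present paper deliberately treats as a black box.
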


\begin{Remark} The proof of Theorem \ref{thm:sdm} is a
  straightforward, although lengthy and cumbersome, adaptation of the
  proof of the degenerate case of the Conley conjecture in
  \cite{GG:gaps}.  In fact, in \cite{GHHM}, the theorem is established
  under somewhat less restrictive conditions. Namely, it suffices to
  require that $\omega|_{\pi_2(W)}=0$, when $x$ is contractible
  (rather than that $\omega$ is exact) and that $\omega$ is atoroidal
  in general; cf.\ Remark \ref{rmk:conditions_mec}.
\end{Remark}

Replacing the upper boundary of the range by the lower boundary in the
definition of an SDM, we arrive at the notion of a
\emph{symplectically degenerate minimum} (SDMin). Thus $F$ has an
SDMin at the origin if and only if $\Delta(F)\in\Z$ and
$\HF_{\Delta(F)-n}(F)\neq 0$. This notion naturally arises in dynamics
(see \cite{He:cot} and also \cite[Remark 1]{GHHM}), and Theorem
\ref{thm:sdm} holds with an SDM replaced by an SDMin.

Just as the local Floer homology in dimension two, the local contact
homology of an isolated closed Reeb orbit $x$ on a 3-manifold is
concentrated in at most one degree. Hence, in this case we have the
following mutually exclusive possibilities when $x$ is isolated and
degenerate, but not necessarily simple:
\begin{itemize}
\item SDM: $\HC_{\Delta(x)}(x)\neq 0$ and $x$ is an SDM,
\item SDMin: $\HC_{\Delta(x)-2}(x)\neq 0$ and $x$ is an SDMin,
\item Saddle or ``Monkey Saddle'': $\HC_{\Delta(x)-1}(x)\neq 0$,
\item Homologically Trivial: $\HC_*(x)=0$.
\end{itemize}
All of these cases do occur. Furthermore, for any isolated iterated
closed Reeb orbit $y^\ka$ (not necessarily non-degenerate), we have
\begin{equation}
\label{eq:dim}
(-1)^l\dim \HC_l(y^\ka)=I_\ka(F),
\end{equation}
where $F$ is the Poincar\'e return map of $y$ and $l$ is the degree where
the cohomology of $y^\ka$ is supported. (Note that $\HC_*(y^\ka)=0$ if
and only if $I_\ka(F)=0$.) This is an immediate consequence of (LC2)
and again the fact that the cohomology is supported in only one degree.

\begin{proof}[Proof of Theorem \ref{thm:S^3}]
Arguing by contradiction, assume that the Reeb flow of $\alpha$ on
$(S^3,\xi_0)$ has only one closed
simple orbit $x$. We will prove that then $x$ is an SDM, and hence the
flow has infinitely many periodic orbits by Theorem
\ref{thm:sdm}. Applying Theorem \ref{thm:mec} and Example \ref{eq:xi_0}, we have
\begin{equation}
\label{eq:one-orbit}
\frac{\sigma(x)}{\Delta(x)}=\frac{1}{2},
\end{equation}
where $\Delta(x)>0$. Therefore, $\sigma(x)>0$. 

Furthermore, recall that $\HC_l(\xi_0)=\Q$ when $l\geq 2$ is even and
$\HC_l(\xi_0)=0$ otherwise; see, e.g., \cite{Bo}.

The orbit $x$ is necessarily elliptic: all Floquet multipliers, i.e.,
eigenvalues of $DF$, lie on the unit circle. Indeed, if $x$ is
hyperbolic and even, we have $\sigma(x)=-1$, which is
impossible. If $x$ is hyperbolic and odd, we have
$\sigma(x)=1/2$ and $\Delta(x)=1$. Furthermore, all orbits $x^\ka$ are
non-degenerate and $\MUCZ(x^\ka)=\ka$.  Since the even iterations of
$x$ are bad orbits, the complex $\CC_*(\alpha)$ is generated by the
orbits $x^\ka$, where $\ka$ is odd, with $|x^\ka|=\ka-1$, which is
also impossible since then $\HC_0(\xi_0)=\Q$.

With $x$ elliptic, write $\Delta(x)=2m+2\alpha$, where $m$ is a
non-negative integer and $\alpha\in [0,\,1)$. By the definition of the
mean index, $e^{\pm 2\pi i\alpha}$ is an eigenvalue of $F$. From
\eqref{eq:one-orbit} we see that $\alpha$ is necessarily rational. Set
$\alpha=p/q$, where $q\geq 2$ and $q>p\geq 0$, and $p$ and $q$ are
mutually prime. (In particular, $x^q$ is degenerate.) To summarize, we
are in the setting of Example \ref{ex:2D}: $I_\ka(F)=1-r$ for some
$r\geq 0$ when $q|\ka$, and $I_\ka(F)=1$ otherwise. We conclude that
$$
\sigma(x)=1-\frac{r}{q}\text{ and } \Delta(x)=2m+\frac{2p}{q},
$$
and \eqref{eq:one-orbit} amounts to
$$
1-\frac{r}{q}=m+\frac{p}{q},
$$
where $m$, $r$ and $p$ are non-negative and $q\geq 2$. 
This condition can only be satisfied when $m=0$ or $m=1$.

Let us first examine the case $m=0$, which is exactly the point where
in \cite{GHHM} we had to rely on the results from
\cite{HWZ1,HWZ2}. Then $p>0$ (for otherwise $\Delta(x)=0$) and
$p+r=q$. In particular, $r<q$.

To rule out this case, observe first that $p=1$. Indeed, we have
$|x^\ka|=0$ as long as $\ka p/q <1$. If $p\geq 2$, the first degree
jump occurs when $\ka p /q$ becomes greater than 1 and the degree
increases to 2. The next jumps occur when $\ka p/q $ moves over the
subsequent integers, while $x^\ka$ is still nondegenerate, or when
$\ka=q$. In the latter case, $x^q$ is degenerate, but
$\Delta(x^q)\geq 4$ since $\ka\geq 2$ and $p\geq 2$, and $x^q$ cannot
contribute to the homology in degree $0$. We conclude that again
$\HC_{0}(\alpha)\neq 0$ when $p>1$, which is impossible.

Thus $p=1$. The degrees of $x^\ka$ form a sequence
$$
|x^\ka|=\underbrace{0,\ldots,0}_{q-1},\,*,\,
\underbrace{2,\,\ldots,2}_{q-1},\,*,\ldots ,
$$
where the undefined degrees of the degenerate iterations $|x^\ka|$ are
entered as the asterisks. It is clear, however, that $\HC_*(x^q)$ must be
concentrated in degree one, to cancel the contribution of previous
iterations to degree zero, and hence $r\geq 1$. 

It follows from \eqref{eq:dim} that $\HC_*(x^\ka)=\Q^{r-1}$, concentrated in degree
$2\ka/q-1$, when $q|\ka$ and that $\HC_*(x^\ka)=\Q$,
concentrated in degree $2\lfloor \ka/q \rfloor$, otherwise, i.e.,
when $x^\ka$ is non-degenerate. Hence, all iterates $x^\ka$ with
$\ka\leq q-1$ contribute to degree $0$ while the iterates $x^\ka$
with $\ka>q$ contribute to the degrees greater than or equal to 2:
$$
\HC_*(x^\ka)=\underbrace{\Q_0,\ldots,\Q_0}_{q-1},\,
\Q^{r-1}_1,\,
\underbrace{\Q_2,\,\ldots,\Q_2}_{q-1},\,\Q^{r-1}_3,\ldots ,
$$
where the subscripts on the right hand side indicate the degree.  To
ensure that $\HC_0(\alpha)=0$, we must have $q-1\leq r-1$, by the long
exact sequence for filtered contact homology, i.e., $q\leq r$,
which is impossible since $r<q$ as stated above.

It follows now that $m=1$ and $p=0=r$. Therefore, the orbit $x$ itself is
degenerate and $\Delta(x)=2$. Clearly, since $\HC_*(x)$ is
concentrated in at most one degree, the fact that $\HC_2(\xi_0)=\Q$
implies that $\HC_2(x)=\Q$, and hence $x$ is an SDM. This concludes the
proof of the theorem.
\end{proof}

\begin{Remark}
\label{rmk:symmetry}
Let $\alpha$ be a contact form on $(S^3,\xi_0)$ invariant under a
contact involution $\tau$. (For instance, this is the case when
$\alpha$ comes from a symmetric star-shapped embedding of $S^3$ into
$\R^4$ and $\tau$ is the antipodal involution or when $\alpha$ is the
lift of a contact form on $\RP^3$ supporting the standard contact
structure.)  Then the Reeb flow of $\alpha$ cannot have exactly two
simple Reeb orbits ``swapped'' by $\tau$. Indeed, assume the contrary
and denote the orbits by $x$ and $\tau(x)$. Then these orbits have the
same local invariants: the same mean index, the same mean iterated
index and the same local contact homology. Now, arguing as in the
proof of Theorem \ref{thm:S^3}, but without relying on Theorem
\ref{thm:sdm}, it is not hard to show that this is
impossible. However, as the example of an irrational ellipsoid in
$\R^4$ shows, $\alpha$ can have exactly two simple orbits each of
which is invariant under $\tau$. Projecting the irrational ellipsoid
contact form to $\RP^3=ST^*S^2$, we obtain an asymmetric Finsler
metric on $S^2$ with exactly two geodesics; see \cite{Ka,Zi} and also
\cite{Gu}.
\end{Remark}

As an easy application of Theorem \ref{thm:S^3}, we have

\begin{Corollary} 
\label{cor:finsler}
Let $M$ be a fiberwise star-shaped (with respect to
  the zero section) hypersurface in $T^*S^2$. Then $M$ carries at
  least two closed characteristics. 
\end{Corollary}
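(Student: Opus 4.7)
The plan is to reduce the corollary to Theorem \ref{thm:S^3} and Remark \ref{rmk:symmetry} by passing through the natural double cover $S^3\to \RP^3\cong ST^*S^2$. First I would note that radial scaling in the fibers of $T^*S^2$ carries the characteristic foliation of $M$ to the Reeb flow of a contact form $\alpha_M$ on $ST^*S^2$ supporting the standard contact structure, with closed characteristics of $M$ in bijection with closed Reeb orbits of $\alpha_M$. Thus it suffices to show that any contact form on $\RP^3$ supporting the standard contact structure has at least two simple closed Reeb orbits.

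Next I would pull $\alpha_M$ back along the universal cover $\pi\colon S^3\to \RP^3$. The lift $\tilde\alpha=\pi^*\alpha_M$ is a contact form on $S^3$ supporting $\xi_0$ that is invariant under the deck involution $\tau$, and its Reeb flow is $\tau$-equivariant. Theorem \ref{thm:S^3} then produces at least two simple closed Reeb orbits of $\tilde\alpha$ on $(S^3,\xi_0)$, and the symmetry considerations of Remark \ref{rmk:symmetry} forbid the configuration consisting of exactly two simple orbits interchanged by $\tau$.

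The final step is a covering-space count. Any simple closed Reeb orbit $\tilde x$ of $\tilde\alpha$ on $S^3$ is either $\tau$-invariant, in which case $\pi(\tilde x)$ is a simple closed Reeb orbit of $\alpha_M$ covered once by $\tilde x$, or non-$\tau$-invariant, in which case $\tilde x$ and $\tau(\tilde x)$ are two distinct simple orbits on $S^3$ both projecting to the same simple orbit on $\RP^3$. Consequently, if $\alpha_M$ possessed only one simple closed Reeb orbit, its preimage on $S^3$ would either consist of a single $\tau$-invariant orbit, contradicting Theorem \ref{thm:S^3}, or of exactly two simple orbits swapped by $\tau$, contradicting Remark \ref{rmk:symmetry}. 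Hence $\alpha_M$ has at least two simple closed Reeb orbits, and $M$ carries at least two closed characteristics.

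The main obstacle I anticipate is not dynamical but a matter of carefully verifying the equivalence of ``closed characteristics of $M$'' with ``closed Reeb orbits of $\alpha_M$'' under the fiberwise scaling and, more importantly, checking that Remark \ref{rmk:symmetry} applies verbatim to the lifted form $\tilde\alpha$, i.e., that the two invariants ($\Delta$ and $\sigma$, together with local contact homology) of the orbit $\tilde x$ and of $\tau(\tilde x)$ genuinely coincide, so that the argument sketched in that remark goes through without modification. Once this is in place, the rest of the proof is essentially a formal covering-space bookkeeping.
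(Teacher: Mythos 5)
Your proposal is correct and follows essentially the same route as the paper: the paper likewise reduces to a contact form on $\RP^3\cong ST^*S^2$, rules out a single \emph{contractible} orbit (whose preimage upstairs would be exactly two simple orbits swapped by $\tau$) via Remark \ref{rmk:symmetry} (or, alternatively, via the non-vanishing of cylindrical contact homology in the non-trivial free homotopy class), and then lifts the remaining non-contractible orbit to a single closed orbit on $(S^3,\xi_0)$, contradicting Theorem \ref{thm:S^3}. The only slip is cosmetic: when $\tilde x$ is $\tau$-invariant, the projection $\tilde x\to\pi(\tilde x)$ is a double cover, not a ``covering once.''
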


When $M$ is fiberwise convex, the Reeb flow is just the geodesic flow
of a (not necessarily symmetric) Finsler metric on $S^2$. Thus, in
particular, it follows that any Finsler metric on $S^2$ has at least
two simple closed geodesics. This result is originally proved in
\cite{BL}. Of course, Corollary \ref{cor:finsler} also immediately
follows from the main theorem of \cite{CGH}.

\begin{proof}
  The hypersurface $M$ is of contact type and contactomorphic to
  $\RP^3=ST^* S^2$ with the standard contact structure $\xi_0$ and some contact
  form $\alpha$. Arguing by contradiction, assume that $(M,\alpha)$
  carries only one simple closed Reeb orbit $x$. We claim that $x$ is not
  contractible.

  This is a standard fact which can be established in a variety of
  ways. For instance, this is a consequence of Remark
  \ref{rmk:symmetry}. Alternatively, this follows from the fact that
  the Reeb flow of any such form $\alpha$ must have non-contractible
  orbits because the cylindrical contact homology of $ST^* S^2$ is
  non-zero for the non-trivial free homotopy type of loops in
  $\RP^3$. The latter assertion can be easily proved by examining the
  indices of the closed orbits for the form coming from an irrational
  ellipsoid in $\R^4$ or by using the Morse--Bott techniques; see,
  e.g., \cite{Bo,VK,VK:br}.  (For Finsler metrics on $S^2$ this is, of
  course, a well-known result in the standard calculus of variations:
  the orbit in question is the energy minimizer within the fixed
  non-trivial free homotopy class; cf.\ \cite{Bott}.)

Finally, since $x$ is not contractible, it lifts to one orbit on $S^3$
and we obtain a contact form on $S^3$ with only one closed Reeb orbit,
which is impossible by Theorem~\ref{thm:S^3}.
 \end{proof}

\end{document}